\definecolor{gnred}{RGB}{255,91,89}
\definecolor{gnred1}{RGB}{71,0,0} 
\definecolor{gnred2}{RGB}{117,0,0} 
\definecolor{gnred3}{RGB}{164,0,0} 
\definecolor{gnred4}{RGB}{211,0,0} 
\definecolor{gnred5}{RGB}{255,0,0} 
\definecolor{gnred6}{RGB}{255,42,34} 
\definecolor{gnred7}{RGB}{255,91,89} 
\definecolor{gnblue1}{RGB}{0,36,71}   
\definecolor{gnblue2}{RGB}{0,60,118}  
\definecolor{gnblue3}{RGB}{0,85,164}
\definecolor{gnblue4}{RGB}{0,108,212}
\definecolor{gnblue4}{RGB}{0,108,212}
\definecolor{gnblue5}{RGB}{0,133,255}  
\definecolor{gnblue6}{RGB}{35,156,255} 
\definecolor{gnblue7}{RGB}{88,177,255} 
\definecolor{gnbrown1}{RGB}{71,27,0}  
\definecolor{gnbrown2}{RGB}{117,45,0} 
\definecolor{gnbrown3}{RGB}{164,62,0} 
\definecolor{gnbrown4}{RGB}{211,80,0} 
\definecolor{gnbrown5}{RGB}{255,97,0} 
\definecolor{gnbrown6}{RGB}{255,127,26} 
\definecolor{gnbrown7}{RGB}{255,155,86} 
\newcommand\Item[1][]{%
  \ifx\relax#1\relax  \item \else \item[#1] \fi
  \abovedisplayskip=0pt\abovedisplayshortskip=0pt~\vspace*{-\baselineskip}}
\newtheoremstyle{ieeeconf}
  {0pt}   
  {0pt}   
  {\normalfont}  
  {\parindent}       
  {\itshape} 
  {:}         
  { } 
  {\thmname{#1} \thmnumber{#2}\thmnote{ (#3)}} 
\renewenvironment{proof}[1][\proofname]{\par
  \pushQED{\qed}%
  \normalfont \topsep\z@
  \trivlist
  \item[\hskip2em
        \itshape
    #1\@addpunct{:}]\ignorespaces
}{%
  \popQED\endtrivlist\@endpefalse
}
\theoremstyle{ieeeconf}
\newtheorem{theorem}{Theorem}
\newtheorem{lemma}[theorem]{Lemma}
\newtheorem{assumption}[theorem]{Assumption}
\newtheorem{definition}[theorem]{Definition}
\newtheorem{remark} [theorem]{Remark}
\theoremstyle{myplainroman}
\newcommand{\norm}[1]{\|#1\|}
\newcommand{\diag}[1]{[#1]}
\newcommand{\subscr}[2]{#1_{\textup{#2}}}
\newcommand{\setdef}[2]{\{#1 \, | \, #2\}}
\newcommand{\map}[3]{#1 \colon #2 \rightarrow #3}
\newcommand{\snorm}[1]{{\left\vert\kern-0.25ex\left\vert\kern-0.25ex\left\vert #1 
		\right\vert\kern-0.25ex\right\vert\kern-0.25ex\right\vert}}
\newcommand{\bfx}{\mathbf{x}}
\newcommand{\real}{\mathbb{R}}
\newcommand{\image}{\operatorname{img}}
\newcommand{\mcA}{\mathcal{A}}
\newcommand{\mcS}{\mathcal{S}}
\newcommand{\mcK}{\mathcal{K}}
\newcommand{\mcQ}{\mathcal{Q}}
\newcommand{\qmin}{\subscr{q}{min}}
\newcommand{\qmax}{\subscr{q}{max}}
\newcommand{\amin}{\subscr{a}{min}}
\newcommand{\amax}{\subscr{a}{max}}
\newcommand{\lmin}{\subscr{\lambda}{min}}
\newcommand{\pinv}[1]{#1^{\dagger}}
\newcommand{\ones}[1]{\mathbf{1}_{#1}}
\DeclareMathOperator{\BR}{\operatorname{BR}}
\DeclareMathOperator{\FBR}{\operatorname{F_{BR}}}
\DeclareMathOperator{\FPseudoG}{\operatorname{F_{PseudoG}}}
\DeclareMathOperator*{\argmin}{arg\,min}
\title{Contractivity of Distributed Optimization and Nash Seeking
  Dynamics}
\author{Anand Gokhale, Alexander Davydov, Francesco Bullo\thanks{This work was in part supported by AFOSR project FA9550-21-1-0203 and NSF Graduate Research Fellowship under Grant 2139319. The second author thanks Ibrahim Ozaslan and Dr. Mihailo Jovanovi{\' c} for valuable conversations regarding primal-dual dynamics. The third author thanks Dr.~Tamer Ba{\c s}ar for an insightful early conversation.}%
\thanks{Anand Gokhale, Alexander Davydov, and Francesco Bullo are with the Center for Control, Dynamical 
Systems, and Computation, UC Santa Barbara, Santa Barbara, CA 93106 USA. {\tt\small 
anand\_gokhale@ucsb.edu, davydov@ucsb.edu, bullo@ucsb.edu}.}}
\begin{document}
\maketitle
\thispagestyle{empty}
\pagestyle{empty}

\begin{abstract}
  In this letter, we study distributed optimization and Nash
  equilibrium-seeking dynamics from a contraction theoretic
  perspective. Our first result is a novel bound on the logarithmic norm of
  saddle matrices.  Second, for distributed gradient flows based upon
  incidence and Laplacian constraints over arbitrary topologies, we
  establish strong contractivity over an appropriate invariant vector
  subspace.  Third, we give sufficient conditions for strong contractivity
  in pseudogradient and best response games with complete information, show
  the equivalence of these conditions, and consider the special case of
  aggregative games.
\end{abstract}

\section{Introduction}
\textit{Problem Description and Motivation: } The past decade has witnessed
rapid progress in distributed optimization problems, spurred on by the
seminal paper~\cite{AN-AO:09}. This growing body of literature is surveyed
in~\cite{TY-XY-JW-DW:19}. A significant majority of the literature shows
the convergence of distributed optimization algorithms to a fixed
point. Beyond convergence, practical implementations of these algorithms
may require robustness to noise, delays, and unmodeled dynamics, and may
involve time-varying parameters.  No unifying framework currently exists
that achieves these favorable properties in distributed optimization.
Parallel to these developments, game theory and its applications in
multi-agent systems have seen recent significant advancements~\cite{LP:22}
and there continues to be great interest for a ``unified mathematical
framework for learning a Nash equilibrium in games". Recently, game
theoreticians have utilized passivity and its derivatives to study the
convergence of Nash equilibrium (NE) seeking dynamics~\cite{DG-LP:19,
  LP:22}.

In recent years, contraction theory~\cite{WL-JJES:98} has
emerged~\cite{FB:23-CTDS} as a promising framework for a broad range of
dynamical systems, including both distributed optimization dynamics and
NE-seeking dynamics. Specifically, the strong contractivity property is
known to guarantee exponential convergence, robustness in time-varying
optimization problems, periodic entrainment, and robustness to noise~\cite[Section 3.4]{FB:23-CTDS}.  For NE-seeking systems with
time-invariant dynamics, strong contractivity implies the existence and
uniquness of a Nash equilibrium, associated with two Lyapunov functions. In
time-varying systems, the error between the equilibrium trajectory and the
system trajectory is explicitly and uniformly upper
bounded~\cite{AD-VC-AG-GR-FB:23f}.  Finally, contracting systems support
the use of efficient numerical integration
algorithms~\cite{SJ-AD-AVP-FB:21f}.


Strong infinitesimal contraction is a strict property. This letter employs
a relaxed version of contractivity called semicontractivity, whereby the
system is proved to be strongly contracting in a vector subspace of the
system; see the recent theoretical works~\cite{SJ-PCV-FB:19q,
  GDP-KDS-FB-MEV:21m, FB:23-CTDS,WW-JJES:05}.

\textit{Related Work: }
Here, we study continuous-time dynamics for distributed optimization problems. Such dynamics have previously been discussed in~\cite{SK-JC-SM:15, JC-SKN:19}. The distributed optimization problem may be recast as a linearly constrained optimization problem, which in turn may be solved using a primal-dual approach~\cite{KJA-LH-HU:58}. Previous work in establishing the contraction of linearly constrained primal-dual setups include~\cite{GQ-NL:19, AD-VC-AG-GR-FB:23f, HDN-TLV-KT-JJES:18}, but is limited to full row rank constraint matrices. The exponential stability of such setups with rank-deficient constraints is established in~\cite{IKO-MRJ:23}. Here, we extend these results to contractivity in the case of non-full rank constraints. Our analysis requires tools from semicontraction theory~\cite{SJ-PCV-FB:19q, FB:23-CTDS, GDP-KDS-FB-MEV:21m,WW-JJES:05}.

An early influential reference on the application of contraction analysis
to games is~\cite{SL-TB:87}. More recently, several works invoke the use of passivity-based tools~\cite{LP:22, DG-LP:19} in multi-agent game theoretic systems for seeking a Nash equilibrium. With a fresh look at these dynamics, we approach it using ideas from interconnections of contracting systems~\cite{FB:23-CTDS}.

\textit{Contributions:}
Our main contributions in this paper are threefold.  (i) We start with a general result about the lognorms of saddle matrices that appear while studying continuous time primal-dual dynamics with a linear equality constraint. Specifically, we show that these dynamics are semicontracting in the presence of redundant constraints, while also improving upon the best-known rate in~\cite{GQ-NL:19}. The use of redundant constraints allows the extension of contraction theory to distributed optimization. We also consider a non-symmetric (1,1) block in the saddle matrix as this is useful for developing Nash-seeking dynamics. (ii) Next, we consider the applications of these contracting dynamics in different optimization settings. We show that the Jacobian matrices obtained by linearizing the linearly constrained primal-dual flow and distributed optimization both form a saddle matrix and thus have semicontracting dynamics. 
We show the relationship of our contraction rate with network synchronizability and study the effect of modeling network constraints using Laplacian and incidence matrix-based constraints.
(iii) Finally, we explore Nash-seeking dynamics in game theoretic setups. We identify equivalent sufficient conditions for the contraction of best response and pseudogradient dynamics. We also extend these results to aggregative games obtaining explicit conditions for relating the Lipschitz constants and strong convexity parameter of the cost functions to the contractivity of the system.

The paper is organized as follows,  some preliminaries are discussed in Section~\ref{sec:prelims}. Then we study saddle matrices in Section~\ref{sec:saddle}. Next, we discuss the contractivity of primal-dual dynamics and distributed optimization in Section~\ref{sec:optimization} and NE-Seeking dynamics in Section~\ref{sec:game_theory}.

\section{Preliminaries}\label{sec:prelims}
\textit{Notation : }
Given $x\in \real^n$, we define $\diag{x}\in \real^{n\times n}$ to be a diagonal matrix whose diagonal entries equal to $x$. We denote a vector of ones by $\ones{n}\in \real^n$. For a matrix $A\in \real^{n \times n}$, let $\alpha(A)$ denote its spectral abscissa. We denote the inner product $\langle\cdot,\cdot\rangle:\real^n \times \real^n \rightarrow \real$.
\subsection{Semicontraction theory}
\begin{definition}[Seminorm]
	A function $\snorm{\cdot}:\real^n \rightarrow \real $ is a seminorm on $\real^n$ if , for all 
  $v,w\in \real^n$ and $a\in \real$:
	\begin{enumerate}
		\item $\snorm{av} = |a|\snorm{v}$.
		\item $\snorm{v+w} \leq \snorm{v} + \snorm{w}$.
	\end{enumerate}
	The kernel of a seminorm $\snorm{\cdot}$ is given by ${
		\mcK = \{v\in \real^n : \snorm{v} = 0\}}$.
\end{definition}
\begin{definition}[Matrix induced Seminorm]
	Let $\snorm{\cdot}$ be a seminorm on $\real^n$. Then, the induced matrix seminorm associated 
  with $\snorm{\cdot}$ is given by
	\begin{equation*}
		\snorm{A} = \max\setdef{\snorm{Av}}{{\snorm{v} = 1 \text{ and } v\perp \mcK} }.
	\end{equation*}
\end{definition}

\begin{definition}[Matrix log seminorm] 
Let $\snorm{\cdot}$ be a seminorm on $\real^n$ and its corresponding induced seminorm 
on $\real^{n\times n}$. The \emph{log seminorm} associated with $\snorm{\cdot}$ is defined by
\begin{equation*}
	\mu_{\snorm{\cdot}}(A) = \lim_{h\rightarrow 0^+} \frac{\snorm{I + hA} - 1}{h}.
\end{equation*}
\end{definition}
\begin{definition}[Semicontracting and contracting systems]
	A system $\dot x = F(x)$ is said to be 
  \begin{enumerate}
    \item strongly infinitesimally semicontracting with respect to a seminorm 
    $\snorm{\cdot}$ with rate $c \geq 0$ if
    \begin{equation}\label{eq:semicontracting}
      \sup_x \mu_{\snorm{\cdot}}(DF(x)) \leq -c.
    \end{equation}
    \item strongly infinitesimally contracting with respect to a norm 
    $\norm{\cdot}$ with rate $c \geq 0$ if~\eqref{eq:semicontracting} holds
    and $\snorm{\cdot} = \norm{\cdot}.$
  \end{enumerate}
\end{definition}
In this paper, we will consider weighted Euclidean seminorm, i.e., seminorms of the form $\snorm{x} = \norm{Rx}_2$, where $R$ is some matrix. We present an equivalent condition 
for the semicontraction of a system in such norms presented in~\cite{SJ-PCV-FB:19q}.
\begin{lemma}[Demidovich Condition]
Let $\snorm{\cdot}$ be a seminorm associated with a log seminorm $\mu$, 
$R\in \real^{k\times n}, k \leq n$ be a full rank matrix, and $P = R^\top R \in \real^{n \times n}$. 
For each $A\in \real^{n\times n}$, and $c\in \real$ and if $\ker(R)$ is invariant under $A$, then,
\begin{equation*}
	\mu_{2,R}(A) \leq c \iff PA +  A^\top P \preceq 2cP
\end{equation*}

\end{lemma}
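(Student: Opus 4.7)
The plan is to characterize $\mu_{2,R}(A)$ as a generalized Rayleigh quotient on the subspace $(\ker R)^\perp$, and then use the invariance of $\ker R$ under $A$ to promote the resulting inequality to all of $\real^n$.

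First, I would expand
\[
  \snorm{(I+hA)v}^2 = v^\top P v + 2h\, v^\top P A v + h^2 v^\top A^\top P A v,
\]
take $h \to 0^+$, and maximize over vectors $v \in (\ker R)^\perp$ with $v^\top P v = 1$. Since $R$ has full row rank, $v^\top P v = \|Rv\|_2^2 > 0$ on $(\ker R)^\perp \setminus \{0\}$, and the standard argument for weighted Euclidean norms yields
\[
  \mu_{2,R}(A) = \max\left\{\tfrac{v^\top(PA+A^\top P)v}{2\, v^\top P v} : v \in (\ker R)^\perp \setminus \{0\}\right\}.
\]
Consequently $\mu_{2,R}(A) \leq c$ is equivalent to the \emph{restricted} inequality $v^\top(PA + A^\top P - 2cP)v \leq 0$ for all $v \in (\ker R)^\perp$.

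The second and essential step is to lift this to the unrestricted matrix inequality $PA + A^\top P \preceq 2cP$. Decompose any $v \in \real^n$ orthogonally as $v = v_1 + v_2$ with $v_1 \in (\ker R)^\perp$ and $v_2 \in \ker R$. Two facts then make every term involving $v_2$ vanish: (a) $P v_2 = R^\top R v_2 = 0$, which kills $v_2^\top P v_2$ and the cross term $v_1^\top A^\top P v_2 = (A v_1)^\top P v_2$; and (b) by the invariance hypothesis $A v_2 \in \ker R$, so $P A v_2 = 0$, which kills both $v_1^\top P A v_2$ and $v_2^\top P A v_2$. Hence
\[
  v^\top(PA + A^\top P - 2cP)v = v_1^\top(PA + A^\top P - 2cP)v_1,
\]
and the restricted and unrestricted inequalities coincide.

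The main obstacle is invoking the invariance hypothesis at precisely the right place: without $A(\ker R) \subseteq \ker R$, the cross term $v_1^\top P A v_2$ need not vanish, and the restricted inequality on $(\ker R)^\perp$ would be strictly weaker than the full matrix inequality on $\real^n$, breaking the forward implication. Once the vanishing of the $v_2$-containing terms is established, both directions of the equivalence follow immediately from the Rayleigh-quotient characterization.
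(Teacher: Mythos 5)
Your proof is correct. Note that the paper itself gives no proof of this lemma---it is imported from the cited semicontraction reference---so there is no in-paper argument to compare against; judged on its own, your argument is sound and self-contained. Your route, characterizing $\mu_{2,R}(A)$ as the maximum of the generalized Rayleigh quotient $v^\top(PA+A^\top P)v/(2\,v^\top P v)$ over $(\ker R)^\perp$ and then using $Pv_2=0$ together with $PAv_2=0$ (the latter exactly from $A$-invariance of $\ker R$) to show that the restricted and unrestricted quadratic-form inequalities coincide, is an elementary version of the standard argument; the cited literature typically phrases the same content by reducing to the unweighted case via the pseudoinverse, i.e.\ $\mu_{2,R}(A)=\mu_2(RAR^\dagger)$ under the invariance hypothesis, and then invoking the classical Demidovich condition for $\mu_2$, which your orthogonal decomposition avoids entirely. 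Your identification of precisely where invariance is indispensable (the cross term $v_1^\top PA v_2$) is also accurate: without it the forward implication fails, e.g.\ for $R=[1\;\;0]$ and $A=\left[\begin{smallmatrix}0&1\\0&0\end{smallmatrix}\right]$ one has $\mu_{2,R}(A)=0$ while no $c$ satisfies the LMI. The only step you leave implicit is the interchange of the limit $h\to 0^+$ with the maximum over the compact set $\{v\in(\ker R)^\perp : v^\top P v=1\}$, which is justified by convexity of $h\mapsto\snorm{I+hA}$ (or by the uniformity of the expansion on that compact set); this deserves a sentence but is not a gap.
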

We also consider the following result relating semicontracting systems with an invariance property and contracting systems from~\cite{GDP-KDS-FB-MEV:21m}.
\begin{lemma}[Semicontraction with invariance property]\label{lem:invariance_property}
  Consider a system $\dot x = f(x)$. Let $\mcK \subset \real^n$ be such that $\mcK^{\perp}$
is an $f-$invariant subspace. Let $f:\real^n \rightarrow \real^n$ be strongly infinitesimally semicontracting with rate $c > 0$ with respect to a seminorm $\snorm{\cdot}$ on $\real^n$ with kernel $\mcK$.
Then, the system admits the cascade decomposition,
\begin{subequations}
  \begin{align}
    \dot x_{\|} &= f_{\|} (x_{\|} + x_{\perp})\\
    \dot x_{\perp} &= f_{\perp}(x_{\perp}), \label{eq:perp_dynamics}
  \end{align}
\end{subequations}
and the perpendicular dynamics~\eqref{eq:perp_dynamics} are strongly infinitesimally contracting on $\mcK_{\perp}$ with rate $c$, with respect to the norm given by the domain restriction of $\snorm{\cdot}$ to $\mcK$.
\end{lemma}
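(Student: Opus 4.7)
The plan is to construct the cascade decomposition via orthogonal projections, derive the closure of the $x_\perp$-dynamics from the invariance and semicontraction hypotheses, and finally transfer the log seminorm bound on $\real^n$ to a log norm bound on $\mcK^\perp$ so that the rate $c$ is inherited by the reduced system. Concretely, I would let $P_\|$ and $P_\perp$ denote the orthogonal projections onto $\mcK$ and $\mcK^\perp$, write $x = x_\| + x_\perp$ with $x_\| := P_\|x$ and $x_\perp := P_\perp x$, and split $f$ as $f(x) = f_\|(x) + f_\perp(x)$ with $f_\|(x) := P_\|f(x)$ and $f_\perp(x) := P_\perp f(x)$.

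The cascade structure will come from combining two facts. First, the $f$-invariance of $\mcK^\perp$ gives $f(y) \in \mcK^\perp$ for every $y \in \mcK^\perp$, equivalently $f_\|(y) = 0$ on $\mcK^\perp$; differentiating this identity along directions tangent to $\mcK^\perp$ yields $Df(y)\mcK^\perp \subseteq \mcK^\perp$ at every $y \in \mcK^\perp$. Second, the semicontraction hypothesis in the weighted Euclidean seminorm $\snorm{\cdot} = \norm{R\cdot}_2$ with $\ker R = \mcK$ (implicit in the Demidovich characterization) forces $Df(x)\mcK \subseteq \mcK$ for every $x$, equivalently $P_\perp Df(x) P_\| = 0$. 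Integrating the latter shows $f_\perp(x_\| + x_\perp) = f_\perp(x_\perp)$ for all $x$, which gives the displayed cascade $\dot x_\| = f_\|(x_\| + x_\perp),\ \dot x_\perp = f_\perp(x_\perp)$ and, by the first fact above, makes $f_\perp : \mcK^\perp \to \mcK^\perp$ a well-defined vector field.

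For the contractivity claim, the Jacobian of the reduced flow at $x_\perp \in \mcK^\perp$ is $A_\perp(x_\perp) := P_\perp Df(x_\perp)|_{\mcK^\perp} : \mcK^\perp \to \mcK^\perp$, and the restriction of $\snorm{\cdot}$ to $\mcK^\perp$ is a genuine norm there. From the defining supremum $\snorm{A} = \sup\{\snorm{Av} : \snorm{v} = 1,\ v \perp \mcK\}$, the sup ranges only over $v \in \mcK^\perp$, and because $\snorm{\cdot}$ annihilates $\mcK$-components one obtains $\snorm{Df(x_\perp)} = \snorm{\cdot}|_{\mcK^\perp}\bigl(A_\perp(x_\perp)\bigr)$. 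Passing to the log (semi)norm via the defining limit $h \to 0^+$ preserves this identification, so $\mu_{\snorm{\cdot}}(Df(x_\perp)) = \mu_{\snorm{\cdot}|_{\mcK^\perp}}(A_\perp(x_\perp))$, and the hypothesis transfers verbatim as $\mu_{\snorm{\cdot}|_{\mcK^\perp}}(A_\perp(x_\perp)) \leq -c$, which is strong contractivity of the perpendicular dynamics with rate $c$.

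The main obstacle I anticipate is the rigorous transfer of the log seminorm on $\real^n$ to the log norm of the induced operator on $\mcK^\perp$: one has to verify that $(I + h\, Df(x_\perp))v$ for $v \in \mcK^\perp$, although it may develop a nonzero $\mcK$-component, contributes only through its $\mcK^\perp$-part to $\snorm{(I+h\,Df(x_\perp))v}$, so that in the $h \to 0^+$ expansion exactly the reduced operator $A_\perp(x_\perp)$ appears. This is where the two invariance facts $Df(x)\mcK \subseteq \mcK$ (which guarantees $R$-invisibility of $\mcK$-bits) and $Df(y)\mcK^\perp \subseteq \mcK^\perp$ for $y \in \mcK^\perp$ (which guarantees that the reduced Jacobian is a well-defined endomorphism) must be used carefully in concert.
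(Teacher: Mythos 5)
Your overall architecture (orthogonal projections, reduction of the induced seminorm to the block $P_{\perp}Df\,P_{\perp}$, and the transfer of the log seminorm to the restriction of $\snorm{\cdot}$ on $\mcK^{\perp}$) is sound, and your fact (a) — that $f$-invariance of $\mcK^{\perp}$ yields $Df(y)\mcK^{\perp}\subseteq\mcK^{\perp}$ for $y\in\mcK^{\perp}$ — is correct. The genuine gap is the step you flag as ``implicit in the Demidovich characterization'': the claim that semicontraction with respect to $\snorm{x}=\norm{Rx}_2$, $\ker R=\mcK$, \emph{forces} $Df(x)\mcK\subseteq\mcK$. With the definitions used in this paper that claim is false, and it is exactly the fact your cascade needs. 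The induced matrix seminorm takes its maximum only over $v\perp\mcK$ with $\snorm{v}=1$, so $\snorm{I+h\,Df(x)}$ depends only on $P_{\perp}Df(x)P_{\perp}$ and is completely blind to the coupling block $P_{\perp}Df(x)P_{\|}$; no bound $\mu_{\snorm{\cdot}}(Df(x))\le-c$ can therefore force that block to vanish. Concretely, take $n=2$, $\snorm{x}=|x_1|$ (so $\mcK=\spn\{e_2\}$) and $f(x)=(-x_1+x_2,\,0)$: here $\mcK^{\perp}=\spn\{e_1\}$ is $f$-invariant (indeed $f$ takes values in $\mcK^{\perp}$), $\mu_{\snorm{\cdot}}(Df)=-1$, yet $\dot x_{\perp}=-x_1+x_2$ depends on the parallel component, so no decoupled perpendicular dynamics of the stated form exist. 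The Demidovich lemma does not rescue the step: there, invariance of $\ker(R)$ under $A$ is a \emph{hypothesis} for the equivalence with the LMI, not a consequence of the log-seminorm bound (the LMI $P\,Df+Df^{\top}P\preceq-2cP$ does force $P_{\perp}Df\,P_{\|}=0$, which is precisely why the equivalence needs that hypothesis), so invoking it to obtain the invariance is circular.

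What this means for your argument: the decoupling $f_{\perp}(x_{\|}+x_{\perp})=f_{\perp}(x_{\perp})$ cannot be derived from the two stated hypotheses; it has to enter as an additional invariance property tied to the seminorm's kernel, e.g.\ $Df(x)\mcK\subseteq\mcK$ for all $x$ (equivalently, translation-invariance of $P_{\perp}f$ along $\mcK$). Note that the paper itself does not prove this lemma — it is imported from the cited reference — and whenever it is applied the requisite kernel invariance is checked separately; see the explicit verification of $\mcS\ker(P)\subseteq\ker(P)$ in the proof of Theorem~\ref{thm:quarter_primal_dual}, where it holds because $A^{\top}y=0$ on $\ker(P)$, not because of the lognorm estimate. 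Your final step — identifying $\mu_{\snorm{\cdot}}(Df(x_{\perp}))$ with the log norm of the reduced Jacobian in the restricted norm on $\mcK^{\perp}$ — is correct and would complete the proof once the cascade structure is justified by that extra assumption rather than deduced from semicontraction.
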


\subsection{Network Interconnection Theorem}

\begin{theorem}[Network Contraction Theorem{~\cite[Theorem 3.22]{FB:23-CTDS}}]\label{thm:Network_interaction}
  Consider an interconnection of $r$ dynamical systems,
  \begin{equation*}
    \dot x_i = f_i(t,x_i,x_{-i}) \quad \text{for}\; i \in \{1,2,\hdots,r\}
  \end{equation*}
  where $x_i\in \real^{n_i}$ and $x_{-i}\in \real^{n-n_i}$. We let $x_{-i}$ denote the vector with all components $x_j$ of $x$, except $x_i$. 
  We assume
  \begin{enumerate}
    \item at fixed $x_{-i}$ and $t$, each map $x_i \rightarrow f_i(t,x_i,x_{-i})$ 
    is strongly infinitesimally contracting with rate $c_i$ with respect to $\norm{\cdot}_i$.
    \item at fixed $x_i$ and $t$, each map $x_{-i}\rightarrow f_i(t,x_i,x_{-i})$ is Lipschitz with constant $\gamma_{ij}$.
    \item The gain matrix 
    \begin{equation*}
      \Gamma = \begin{bmatrix}
        -c_1 & \hdots & \gamma_{1r}\\
         \vdots & \ddots & \vdots \\
         \gamma_{r1} & \hdots & -c_r
      \end{bmatrix}
    \end{equation*}
    is Hurwitz.
  \end{enumerate}
  Then, for every $\epsilon \in {]0,\alpha(\Gamma)[}$, there exists a norm such that the interconnected system is strongly infinitesimally 
  contracting with a rate $|\alpha(\Gamma) + \epsilon|$.
\end{theorem}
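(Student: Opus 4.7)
The plan is to analyze the Jacobian of the interconnected vector field in block form and reduce the log-norm bound to a bound on the $r \times r$ gain matrix $\Gamma$. Writing the stacked dynamics as $\dot x = F(t,x)$ with $F = (f_1, \dots, f_r)$, the Jacobian $J(t,x)$ has $(i,i)$ diagonal block $D_{x_i} f_i$ and $(i,j)$ off-diagonal block $D_{x_j} f_i$. Assumption (i) yields $\mu_{\norm{\cdot}_i}(D_{x_i} f_i) \leq -c_i$, and assumption (ii) yields the operator-norm bound $\norm{D_{x_j} f_i}_{i \leftarrow j} \leq \gamma_{ij}$. Consequently, the aggregate matrix $\Aagg{J(t,x)} \in \real^{r \times r}$, whose $(i,i)$ entry is $\mu_{\norm{\cdot}_i}(D_{x_i} f_i)$ and whose $(i,j)$ entry is $\norm{D_{x_j} f_i}_{i \leftarrow j}$ for $i \neq j$, is entrywise dominated by the Metzler matrix $\Gamma$.

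Next, I would pick positive weights $\eta \in \real^r$ (to be specified later) and equip $\real^r$ with a weighted monotone norm $\norm{\cdot}_\eta$. The composite norm on the product state space is then
\begin{equation*}
\norm{x}_{\composite} := \norm{(\norm{x_1}_1, \dots, \norm{x_r}_r)}_\eta .
\end{equation*}
A standard aggregate log-norm inequality for block Jacobians under monotone-composite norms asserts
\begin{equation*}
\mu_{\composite}(J(t,x)) \leq \mu_\eta(\Aagg{J(t,x)})
\end{equation*}
pointwise in $(t,x)$. Because $\Aagg{J(t,x)}$ and $\Gamma$ are both Metzler and the former is entrywise dominated by the latter, monotonicity of the log-norm of Metzler matrices under monotone norms gives $\mu_\eta(\Aagg{J(t,x)}) \leq \mu_\eta(\Gamma)$. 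Finally, since $\Gamma$ is Hurwitz with $\alpha(\Gamma) < 0$, for any $\epsilon > 0$ one can choose $\eta$ (via an appropriate Perron eigenvector of $\Gamma$, perturbed slightly if the dominant eigenvalue is defective) so that $\mu_\eta(\Gamma) \leq \alpha(\Gamma) + \epsilon$. Combining yields $\sup_{t,x} \mu_{\composite}(J(t,x)) \leq \alpha(\Gamma) + \epsilon$, which is the claimed contraction at rate $|\alpha(\Gamma) + \epsilon|$.

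The main obstacle is the aggregate log-norm inequality $\mu_{\composite}(J) \leq \mu_\eta(\Aagg{J})$. One proves it by expanding $\norm{(I + hJ)x}_{\composite}$, applying the component triangle inequality blockwise so that $\norm{(I+hJ)x}_i \leq \sum_j \bigl(\delta_{ij} + h\,[\Aagg{J}]_{ij}\bigr)\norm{x_j}_j + o(h)$ for each $i$, invoking monotonicity of $\norm{\cdot}_\eta$ to compare the resulting vector of component norms against $\norm{(I + h\,\Aagg{J})\,(\norm{x_1}_1,\dots,\norm{x_r}_r)^\top}_\eta$, and passing to the limit $h \to 0^+$. A secondary technicality lies in choosing the weights $\eta$: for a Metzler $\Gamma$ with a simple Perron eigenvalue and strictly positive right Perron eigenvector $v$, the weighted $\ell_\infty$-norm $\norm{x}_{\infty,v} = \max_i |x_i|/v_i$ attains $\mu_{\infty,v}(\Gamma) = \alpha(\Gamma)$ exactly; the degenerate case is handled by a small Metzler perturbation whose effect is absorbed into $\epsilon$.
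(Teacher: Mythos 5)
The paper does not prove this statement: it is imported verbatim from the cited reference (Theorem 3.22 of \cite{FB:23-CTDS}), and your argument is precisely the standard proof of that result --- block Jacobian, the aggregate log-norm majorization $\mu_{\composite}(J)\leq\mu_\eta(\Aagg{J})$ for a monotone composite norm, entrywise domination of the Metzler matrix $\Aagg{J}$ by $\Gamma$, and a weighted $\ell_\infty$ norm built from a (possibly perturbed) Perron vector of the Hurwitz Metzler matrix $\Gamma$ so that $\mu_\eta(\Gamma)\leq\alpha(\Gamma)+\epsilon$. Your proof is correct as sketched (modulo the implicit smoothness needed to pass from the Lipschitz/contractivity hypotheses to Jacobian-block bounds, and the theorem's interval should read $\epsilon\in{]0,-\alpha(\Gamma)[}$), so there is nothing to reconcile with the paper itself.
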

\section{Logarithmic norm of saddle matrices with redundant constraints, and asymmetry}\label{sec:saddle}
We begin with a general result about Hurwitz saddle matrices. We improve upon the bound for the contraction rate presented 
in~\cite{AD-VC-AG-GR-FB:23f} using a similar proof technique, with extensions to allow for redundant constraints by modifying the weight matrix.
\begin{theorem}[Semicontractivity of Saddle Matrices]\label{thm:quarter_primal_dual}
Given $Q\in\real^{n \times n}$, $A\in\real^{m\times{n}}$, and a parameter
$\tau>0$, consider the \emph{saddle matrix}
\begin{equation*}
  \mcS = \begin{bmatrix}
    -Q & -A^\top \\
    \tau^{-1} A & 0
  \end{bmatrix} \in \real^{(m+n)\times (m+n)}.
\end{equation*}
\begin{enumerate}
\item Let $\lmin(Q+Q^\top)/2 = \qmin>0$ and $Q^\top Q \preceq \qmax (Q+Q^\top)/2$, that is,
  $\qmax=\subscr{\sigma}{max}^2(Q)/\qmin$.
\item Let $A A^\top \succeq \amin \Pi_A$, with $\amin>0$, and $A A^\top \preceq
  \amax I_m$, where the matrix $\Pi_A\in\real^{m\times{m}}$ is the
  orthogonal projection onto the image of $A$, that is, in the linear map
  interpretation, $\map{\Pi_A}{\real^m}{\image(A)}$. Note $\amin\leq\amax$.
\end{enumerate}
The following \emph{semicontractivity LMI} holds:
\begin{equation}
  \label{eq:saddle-semicontract}
    \mcS^\top P + P\mcS\preceq -2 c P ,
  \end{equation}
  where $P\in\real^{(n+m)\times(n+m)}$ is defined by
  \begin{equation}\label{def:P}
    P = \begin{bmatrix}
      I_n & \alpha A^\top \\ \alpha A & 
      \tau \Pi_A \end{bmatrix} \succeq 0
  \end{equation}
  such that $\mcS \ker(P)\subseteq \ker(P)$, with
  \begin{equation*}
    \alpha=\frac{1}{2}\min\Big\{\frac{1}{\qmax},\tau\frac{\qmin}{\amax}\Big\}, \quad
    \text{and} \quad
    c=\frac{1}{2}\tau^{-1}\alpha\amin.
  \end{equation*}
  \end{theorem}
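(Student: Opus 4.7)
The proof splits into three phases: (i) verify $P \succeq 0$ with $\ker(P) = \{0\}\times \ker(A^\top)$; (ii) verify the invariance $\mcS \ker(P) \subseteq \ker(P)$; (iii) establish the semicontractivity LMI.

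For phase (i), I would decompose $w = w_{\|} + w_{\perp}$ with respect to $\real^m = \image(A) \oplus \ker(A^\top)$, so that the quadratic form of $P$ reduces to $\|v\|^2 + 2\alpha \langle Av, w_{\|}\rangle + \tau \|w_{\|}\|^2$. Cauchy--Schwarz together with $\|Av\| \leq \sqrt{\amax}\|v\|$ reduces positive semidefiniteness to the scalar condition $\alpha^2\amax \leq \tau$, which I would verify on both branches of $\alpha = \frac{1}{2}\min\{1/\qmax,\,\tau\qmin/\amax\}$ using the easily-established inequality $\qmin \leq \qmax$ (a consequence of $\sigma_{\max}(Q)\geq\qmin$). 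The same decomposition identifies $\ker(P) = \{0\}\times\ker(A^\top)$, and phase (ii) then follows from the one-line calculation $\mcS[0; w_\perp] = [-A^\top w_\perp;\,0] = 0$.

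The bulk of the work is phase (iii). After explicitly computing the blocks of $M := \mcS^\top P + P\mcS + 2cP$, one observes that every entry kills $w_\perp \in \ker(A^\top)$, so it suffices to verify $M \preceq 0$ on $\real^n \times \image(A)$. Setting $Q_s := (Q+Q^\top)/2$, this amounts to showing
\begin{align*}
&2 v^\top Q_s v - 2\alpha\tau^{-1}\|Av\|^2 - 2c\|v\|^2 \\
&\quad {} + 2\alpha\langle (Q-2cI_n)v,\, A^\top w\rangle + 2\alpha \|A^\top w\|^2 - 2c\tau \|w\|^2 \geq 0
\end{align*}
for all $v \in \real^n$ and $w \in \image(A)$. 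I would bound the cross term by Young's inequality with unit parameter, which exactly absorbs $-\alpha\|A^\top w\|^2$ against the $2\alpha\|A^\top w\|^2$ term. The residual $\|w\|^2$-contribution $(\alpha\amin - 2c\tau)\|w\|^2$ then vanishes at the stated $c = \alpha\amin/(2\tau)$ thanks to $\|A^\top w\|^2 \geq \amin\|w\|^2$ on $\image(A)$. Expanding $\|(Q-2cI_n)v\|^2$ using $v^\top Q v = v^\top Q_s v$, and applying the hypothesis $\|Qv\|^2 \leq \qmax v^\top Q_s v$, $\|Av\|^2 \leq \amax\|v\|^2$, and $v^\top Q_s v \geq \qmin\|v\|^2$, reduces positivity to a scalar coefficient inequality. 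The bounds $\alpha\qmax \leq 1/2$ and $\alpha\amax/\tau \leq \qmin/2$ (both by construction of $\alpha$), together with the consequent $c \leq \qmin/4$, close the estimate.

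The principal obstacle is the asymmetry of $Q$: the cross term pairs the full $Q$ (not $Q_s$) against $A^\top w$, so a naive operator bound on $\|Q\|$ would be too crude; the nonstandard hypothesis $Q^\top Q \preceq \qmax Q_s$ is precisely what lets the Young step absorb $\|Qv\|^2$ back into $v^\top Q_s v$. The other subtlety is coordinating $\alpha$ (minimum of two bounds) and $c$ so that the $\|w\|^2$ terms cancel exactly while the residual $v$-quadratic remains nonnegative — all under the projection $\Pi_A$ in the $(2,2)$ block of $P$, which forces the reduction to $\image(A)$ rather than all of $\real^m$.
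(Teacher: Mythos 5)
Your proposal is correct and follows essentially the same route as the paper's own proof: the same weight $P$ and kernel $\{0\}\times\ker(A^\top)$, the same invariance check, and the same scalar estimates ($\alpha\qmax\le\tfrac12$, $\alpha\amax/\tau\le\qmin/2$, $\amin\le\amax$, $c\le\qmin$) closing the argument. Your Young-inequality step with unit parameter, absorbing the cross term against $\alpha\|A^\top w\|^2$ and $\alpha\|(Q-2cI_n)v\|^2$, is exactly the quadratic-form version of the paper's factorization through $\begin{bmatrix} I_n & 0\\ 0 & A\end{bmatrix}$ followed by the Schur complement of the $\alpha I_n$ block, so the two proofs coincide in substance.
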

  \begin{proof}
    The proof is deferred to the Appendix~\ref{sec:appendix}.
  \end{proof}
  \begin{arxiv}
    Next, we discuss methods to sharpen the rate obtained. First, we identify an $\alpha$ that achieves a better rate, in exchange for a more complicated expression. Notably, this rate improves upon the previous rate by a factor of $4/3$ when $\tau$ is arbitrarily small, i.e. the dual dynamics are much faster than the primal dynamics. 
    \begin{theorem}[Semicontractivity of Saddle Matrices for small $\tau$]~\label{thm:third_rate_primal_dual}
      Given $Q\in\real^{n \times n}$, $A\in\real^{m\times{n}}$, and a parameter
      $\tau>0$, consider the \emph{saddle matrix}
      \begin{equation*}
        \mcS = \begin{bmatrix}
          -Q & -A^\top \\
          \tau^{-1} A & 0
        \end{bmatrix} \in \real^{(m+n)\times (m+n)}.
      \end{equation*}
      \begin{enumerate}
      \item Let ${\lmin(Q+Q^\top)/2 = \qmin>0}$ and
        ${Q^\top Q \preceq \qmax (Q+Q^\top)/2}$, that is,
        ${\qmax=\subscr{\sigma}{max}^2(Q)/\qmin}$. Note $\qmin\leq\qmax$.
      \item Let $A A^\top \succeq \amin \Pi_A$, with $\amin>0$, and ${A A^\top \preceq
        \amax I_m}$, where the matrix $\Pi_A\in\real^{m\times{m}}$ is the
        orthogonal projection onto the image of $A$, that is, in the linear map
        interpretation, $\map{\Pi_A}{\real^m}{\image(A)}$. Note ${\amin\leq\amax}$.
      \end{enumerate}
      
      Then, the following \emph{semicontractivity LMI} holds:
      \begin{equation*}
          \mcS^\top P + P\mcS\preceq -2 c P ,
        \end{equation*}
        where $P\in\real^{(n+m)\times(n+m)}$ is defined by as per equation~\eqref{def:P}.
        The rate $c=\tfrac{1}{2}\tau^{-1}\alpha\amin$, and 
        \begin{align*}
          \alpha = \min\left\{{\frac{\epsilon}{\qmax}, \frac{2-\epsilon}{3}\tau \frac{\qmin}{\amax}}\right\}
        \end{align*}
        for $0 < \epsilon < 2$.
    \end{theorem}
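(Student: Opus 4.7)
The plan is to follow the proof strategy of Theorem~\ref{thm:quarter_primal_dual} closely, keeping the same weight matrix $P$ and the same relation $c = \tfrac{1}{2}\tau^{-1}\alpha\amin$, but exposing a free parameter $\beta > 0$ when applying Young's inequality to the cross block, and then setting $\alpha\beta\qmax = \epsilon$ so that the free parameter $\epsilon$ from the statement appears naturally. The refined tradeoff between the $(Q+Q^\top)$ dissipation on the primal block and the $AA^\top$ dissipation on the dual block is what produces the $(2-\epsilon)/3$ factor.

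First I would compute $M := \mcS^\top P + P\mcS + 2cP$ blockwise, obtaining
\begin{align*}
M_{11} &= -(Q+Q^\top) + 2\alpha\tau^{-1}A^\top A + 2cI_n,\\
M_{12} &= \alpha(2cI_n - Q^\top)A^\top,\\
M_{22} &= -2\alpha AA^\top + 2c\tau \Pi_A.
\end{align*}
The $\mcS$-invariance of $\ker(P) = \{0\}\times\ker(A^\top)$ follows because $\mcS(0,v) = (-A^\top v,0) = 0$ for $v\in\ker(A^\top)$, so one may restrict attention to $v\in\image(A)$. I would then bound the cross term by Young's inequality,
\begin{equation*}
2\alpha\langle (2cI_n - Q)u,\, A^\top v\rangle \leq \alpha\beta\|(2cI_n - Q)u\|^2 + \alpha\beta^{-1}\|A^\top v\|^2,
\end{equation*}
expand $\|(2cI_n - Q)u\|^2 = u^\top(Q^\top Q - 2c(Q+Q^\top) + 4c^2 I_n)u$, and use $Q^\top Q\preceq \qmax(Q+Q^\top)/2$ to express the $u$-part in terms of $u^\top(Q+Q^\top)u$ and $\|u\|^2$.

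The crucial simplification is on the dual side: since $\|A^\top v\|^2 = v^\top AA^\top v$, the Young-generated term combines with $-2\alpha v^\top AA^\top v$ from $M_{22}$ into $\alpha(\beta^{-1}-2)v^\top AA^\top v$, while the identity $2c\tau = \alpha\amin$ turns $v^\top(2c\tau\Pi_A)v$ into $\alpha\amin\|v\|^2$. Using $v^\top AA^\top v\geq \amin\|v\|^2$ for $v\in\image(A)$, the dual constraint collapses to the single requirement $\beta \geq 1$. On the primal side, setting $\alpha\beta\qmax = \epsilon$ makes the effective coefficient of $u^\top(Q+Q^\top)u$ equal to $-(2-\epsilon)/2$ at leading order; after invoking $(Q+Q^\top)\succeq 2\qmin I_n$ and $A^\top A \preceq \amax I_n$, and simplifying $2\amax + \amin \leq 3\amax$ via the standing inequality $\amin\leq\amax$, the primal constraint reduces to $\alpha\leq (2-\epsilon)\tau\qmin/(3\amax)$. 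The dual condition $\beta\geq 1$, via $\alpha\beta\qmax = \epsilon$, reads $\alpha\leq \epsilon/\qmax$, and taking the minimum of the two upper bounds recovers the announced formula for $\alpha$.

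The main obstacle will be controlling the lower-order terms of order $c$ and $c^2$ that arise from the expansion of $\|(2cI_n - Q)u\|^2$ and from the product $\alpha\beta c^2$. Because $c = \tfrac{1}{2}\tau^{-1}\alpha\amin$ is itself proportional to $\alpha/\tau$, these corrections introduce a hidden $\alpha$-dependence that couples the primal and dual conditions and must be shown to be absorbed by the slack in $2\amax + \amin \leq 3\amax$. This is where the argument is tightest, and it also explains why the factor $3$ in the denominator cannot trivially be sharpened without a finer accounting of the dual bound in the regime $\amin < \amax$.
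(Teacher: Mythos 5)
Your route is essentially the paper's. The paper reduces to the same blockwise matrix (your $M$ equals minus the paper's $\mcQ$), controls the cross block against the $(2,2)$ block --- there by factoring out $\begin{bmatrix} I_n & 0\\ 0 & A\end{bmatrix}$ and taking a Schur complement, which is exactly your Young step with $\beta=1$ --- and uses the same three estimates $Q^\top Q\preceq \qmax(Q+Q^\top)/2$, $2\amax+\amin\le 3\amax$, and $AA^\top\succeq\amin\Pi_A$ on $\image(A)$ together with $2c\tau=\alpha\amin$. Your extra parameter $\beta$ with the coupling $\alpha\beta\qmax=\epsilon$ buys nothing: your dual-side requirement forces $\beta\ge 1$, i.e.\ $\alpha\le\epsilon/\qmax$, and once that holds the $\beta=1$ (Schur) bound already gives coefficient $\alpha\qmax/2\le\epsilon/2$ on $u^\top(Q+Q^\top)u$, so you land on precisely the paper's two conditions $\alpha\le\epsilon/\qmax$ and $\alpha\le\tfrac{2-\epsilon}{3}\tau\qmin/\amax$.

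Two points in your sketch need repair. First, the ``main obstacle'' you flag does close, but not where you place it: retain the term $-2\alpha\beta c\,u^\top(Q+Q^\top)u$ from expanding $\|(2cI_n-Q)u\|^2$; it dominates $4\alpha\beta c^2\|u\|^2$ exactly when $Q+Q^\top\succeq 2cI_n$, which holds since $c=\tfrac12\tau^{-1}\alpha\amin\le\tfrac{2-\epsilon}{6}\tfrac{\amin}{\amax}\qmin<\qmin$ --- this is the analogue of~\eqref{ineq:two_quarter_two} in the proof of Theorem~\ref{thm:quarter_primal_dual}. The order-$c$ term $2c\|u\|^2=\tau^{-1}\alpha\amin\|u\|^2$ is the piece absorbed by the slack $2\amax+\amin\le3\amax$; there is no hidden coupling between the primal and dual conditions. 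Second, you never verify $P\succeq 0$, equivalently $\alpha^2\amax<\tau$, which your identification $\ker(P)=\{0\}\times\ker(A^\top)$ silently uses, and which is the one step the paper's proof explicitly singles out as changing for the new, larger $\alpha$: it follows from $\alpha^2\le \min\{\tfrac{\epsilon}{\qmax},\tfrac{2-\epsilon}{3}\tau\tfrac{\qmin}{\amax}\}\cdot\max\{\tfrac{\epsilon}{\qmax},\tfrac{2-\epsilon}{3}\tau\tfrac{\qmin}{\amax}\}=\tfrac{\epsilon(2-\epsilon)}{3}\tfrac{\qmin}{\qmax}\tfrac{\tau}{\amax}\le\tfrac{\tau}{3\amax}$, using $\epsilon(2-\epsilon)\le 1$ on $(0,2)$; this check must be included for the proof to be complete.
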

    \begin{proof}
    The proof follows similar to that of Theorem~\ref{thm:quarter_primal_dual}, and the differences in proofs are discussed in Appendix~\ref{sec:appendix_proof_third} 
    \end{proof}
    
    \begin{remark}
    In the case where ${\tau \rightarrow 0 }$, we get an improved rate of convergence $c$ that approaches $\frac{1}{3} \frac{\amin}{\amax}\qmin$,  for an appropriately chosen $\epsilon$.
    \end{remark}
    
    Finally, we identify a sharper bound that depends on a quadratic constraint, complicating the expression of the bound even further.
    \begin{theorem}[Semicontractivity of Saddle Matrices with a sharper rate]\label{thm:half_rate_primal_dual}
    Given $Q\in\real^{n \times n}$, $A\in\real^{m\times{n}}$, and a parameter
    $\tau>0$, consider the \emph{saddle matrix}
    \begin{equation*}
      \mcS = \begin{bmatrix}
        -Q & -A^\top \\
        \tau^{-1} A & 0
      \end{bmatrix} \in \real^{(m+n)\times (m+n)}.
    \end{equation*}
    \begin{enumerate}
      \item Let ${\lmin(Q+Q^\top)/2 = \qmin>0}$ and
        ${Q^\top Q \preceq \qmax (Q+Q^\top)/2}$, that is,
        ${\qmax=\subscr{\sigma}{max}^2(Q)/\qmin}$. Note $\qmin\leq\qmax$.
      \item Let $A A^\top \succeq \amin \Pi_A$, with $\amin>0$, and ${A A^\top \preceq
        \amax I_m}$, where the matrix $\Pi_A\in\real^{m\times{m}}$ is the
        orthogonal projection onto the image of $A$, that is, in the linear map
        interpretation, $\map{\Pi_A}{\real^m}{\image(A)}$. Note ${\amin\leq\amax}$.
      \end{enumerate}

    Then, the following \emph{semicontractivity LMI} holds:
    \begin{equation*}
        \mcS^\top P + P\mcS\preceq -2 c P ,
      \end{equation*}
      where $P\in\real^{(n+m)\times(n+m)}$ is defined by as per equation~\eqref{def:P}.
      The rate $c=\tfrac{1}{2}\tau^{-1}\alpha\amin$, and $\alpha$ depends on the roots of the following quadratic equation,
      \begin{align}\label{eq:half_rate_quadratic}
          2 - \alpha (\qmax +3\tau^{-1} \frac{\amax}{\qmin}) + \tau^{-1} \alpha^2 \amin = 0.
      \end{align}
    This quadratic has two roots, $\beta_1 < \beta_2$, and ${\alpha = \min\left\{\beta_1, \frac{\tau \qmin}{\amin} \right\}}$.
     \end{theorem}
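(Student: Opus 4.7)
The plan is to adapt the proof of Theorem~\ref{thm:quarter_primal_dual} from the appendix, replacing its loose bounding step by a single Young split that yields the quadratic~\eqref{eq:half_rate_quadratic}. First, I would expand the block matrix $M := \mcS^\top P + P\mcS + 2cP$. Using the projection identities $\Pi_A A = A$ and $A^\top \Pi_A = A^\top$, the cross blocks telescope and
\begin{equation*}
  M = \begin{bmatrix} -(Q+Q^\top) + 2\tau^{-1}\alpha A^\top A + 2cI_n & \alpha(2cI_n - Q^\top)A^\top \\[2pt] \alpha A(2cI_n - Q) & -2\alpha AA^\top + 2c\tau\, \Pi_A \end{bmatrix}.
\end{equation*}
A direct check shows $\mcS\, \ker(P) \subseteq \ker(P)$, so it suffices to verify $M \preceq 0$ on $\image(P) = \real^n \oplus \image(A)$, where I can invoke a generalized Schur complement with pseudoinverse $(\tau \Pi_A)^\dagger = \tau^{-1}\Pi_A$.

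With $c = \tfrac{1}{2}\tau^{-1}\alpha\amin$, the hypothesis $AA^\top \succeq \amin \Pi_A$ makes the $(2,2)$ block negative semidefinite on $\image(A)$, and $AA^\top \preceq \amax I_m$ controls its pseudoinverse from above. I would then form the Schur complement and reduce $M \preceq 0$ to a single matrix inequality in the top-left block plus a correction coming from $M_{12} M_{22}^{\dagger} M_{12}^{\top}$. The crucial departure from Theorem~\ref{thm:quarter_primal_dual} is here: rather than splitting the cross term $\alpha(2cI_n - Q^\top) A^\top$ with a decoupled Young inequality (which charges $\qmax$ and $\amax/\qmin$ in two independent linear bounds on $\alpha$), I would keep one free parameter throughout and use $Q^\top Q \preceq \qmax (Q+Q^\top)/2$ to fold $Q^\top Q$ back into the strong-convexity term $Q+Q^\top$. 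The condition then collapses to a single scalar inequality in $\alpha$.

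Collecting coefficients produces precisely equation~\eqref{eq:half_rate_quadratic}. Since the leading coefficient $\tau^{-1}\amin$ and the constant $2$ are both positive, the two real roots $\beta_1 < \beta_2$ are positive, and the semicontractivity inequality holds for all $\alpha \in [0,\beta_1]$; the boundary value $\alpha = \beta_1$ maximises the rate $c = \tfrac{1}{2}\tau^{-1}\alpha\amin$. The secondary bound $\alpha \leq \tau\qmin/\amin$, the analogue of the $\alpha \leq \tau\qmin/\amax$ constraint in Theorem~\ref{thm:quarter_primal_dual}, arises from ensuring the Schur reduction respects the residual condition $2c \leq \qmin$ needed for the $(1,1)$ block to remain negative semidefinite on directions unaffected by the Schur correction. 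Combining the two, $\alpha = \min\{\beta_1, \tau\qmin/\amin\}$, as claimed.

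The main obstacle is the coupled Young split in the second step. A naive, decoupled split recovers exactly the $\tfrac{1}{4}$ prefactor of Theorem~\ref{thm:quarter_primal_dual} but discards the cross coupling; carrying one free parameter through the entire calculation is what preserves the $\alpha^2$ term in the final scalar condition and produces the quadratic~\eqref{eq:half_rate_quadratic} rather than two decoupled linear bounds. Once this split is set up correctly, the remaining verifications (invariance of $\ker(P)$, pseudoinverse bounds, and Vieta-style analysis of the roots) are mechanical.
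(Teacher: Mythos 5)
Your overall strategy is essentially the paper's: keep the coupled term $\alpha c\,(Q+Q^\top)$ in the Schur-complement condition instead of discarding it in a decoupled split, and fold $Q^\top Q$ into $Q+Q^\top$ via $Q^\top Q \preceq \qmax (Q+Q^\top)/2$. Concretely, the paper replaces \eqref{ineq:two_quarter_one}--\eqref{ineq:two_quarter_two} by the tighter pair $(1+\alpha c)(Q+Q^\top)-\alpha Q^\top Q \succeq 2(\tau^{-1}\alpha A^\top A + c I_n)$ and $\alpha c (Q+Q^\top) \succeq 4\alpha c^2 I_n$; with $c=\tfrac{1}{2}\tau^{-1}\alpha\amin$ the first collapses to the scalar condition $(2-\alpha\qmax+\tau^{-1}\alpha^2\amin)\qmin \geq 3\tau^{-1}\alpha\amax$, i.e.\ nonnegativity of the quadratic \eqref{eq:half_rate_quadratic}, and the second yields the side constraint $\alpha \leq \tau\qmin/\amin$ (so that $2c \leq \qmin$), exactly the role you assign to it.

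There is, however, a genuine gap at the end. You claim the quadratic has two positive real roots because its leading coefficient $\tau^{-1}\amin$ and constant term $2$ are positive; this is not a valid argument --- such a quadratic can have negative discriminant and no real roots, and real-rootedness is part of what must be proved (it is asserted in the theorem statement). Moreover, you never re-verify $P \succeq 0$: in Theorem~\ref{thm:quarter_primal_dual} positive semidefiniteness of $P$ in \eqref{def:P} required $\alpha^2 < \tau/\amax$, and since the new $\alpha=\min\{\beta_1,\tau\qmin/\amin\}$ can exceed the old one, this cannot be inherited; your Schur reduction presupposes it in order for the seminorm to make sense. The paper closes both holes with one computation: it evaluates the quadratic at $\alpha=\sqrt{\tau/\amax}$ and shows it is negative there by AM--GM, since $\qmax\sqrt{\tau/\amax}+3\sqrt{\amax/\tau}/\qmin \geq 2\sqrt{3\qmax/\qmin} \geq 2\sqrt{3} > 3 \geq 2+\amin/\amax$; because the quadratic equals $2>0$ at $\alpha=0$, a root $\beta_1$ exists with $\beta_1<\sqrt{\tau/\amax}$, which simultaneously gives existence of $\beta_1<\beta_2$, nonnegativity of the quadratic on $[0,\beta_1]$, and $P\succeq 0$ for every admissible $\alpha\leq\beta_1$. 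Your sketch needs this (or an equivalent discriminant estimate plus a separate proof that $\beta_1<\sqrt{\tau/\amax}$) to be complete. A minor additional slip: the upper bound on the pseudoinverse of the $(2,2)$ block comes from the lower bound $AA^\top \succeq \amin\Pi_A$, not from $AA^\top \preceq \amax I_m$; $\amax$ enters only through $A^\top A \preceq \amax I_n$ in the $(1,1)$ block.
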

    \begin{proof}
      The proof follows similar to that of Theorem~\ref{thm:quarter_primal_dual}, and the differences in proofs are discussed in Appendix~\ref{sec:appendix_proof_half} 
     \end{proof}
    Our rate improves upon the rate presented in~\cite{GQ-NL:19}. In order to work with a simple interpretable expression, our analysis in the rest of the paper uses the result in Theorem~\ref{thm:quarter_primal_dual}. Since our norm allows the use of redundant constraints, we may consider problems with a consensus constraint over a graph, such as distributed optimization problems. Further, nonsymmetry of the $(1,1)$ block in the saddle matrix finds applications in NE-seeking dynamics in games.
    
  \end{arxiv}
\begin{lcss}
  We discuss additional sharper results for the contraction rate, including a faster contraction rate for small $\tau$ in the arxiv version\footnote{See https://arxiv.org/abs/2309.05873 for sharper results.}. Our rate improves upon the rate presented in~\cite{GQ-NL:19} by a factor of 2. Since our norm allows the use of redundant constraints, we may consider problems with a consensus constraint over a graph, such as distributed optimization problems. Further, nonsymmetry of the saddle matrix finds applications in NE-seeking dynamics in games.

\end{lcss}
\section{Contractivity in Optimization Dynamics}\label{sec:optimization}
In this section, we study the implications of
Theorem~\ref{thm:quarter_primal_dual} on constrained optimization problems. Specifically, we establish contractivity for the linearly constrained primal-dual flow and the distributed optimization problem. We prove contractivity on a subspace which includes the primal space by proving semicontractivity on the full space. Apart from guaranteeing convergence, showing contractivity leads to useful properties, such as robustness to noise, delays, and strong guarantees over equilibrium tracking and periodic entrainment to time-varying parameters~\cite[Section 3.4]{FB:23-CTDS}.
\subsection{Standard Primal-Dual Dynamics}
Consider the primal-dual flow for a linearly constrained optimization problem~\cite{KJA-LH-HU:58},
\begin{subequations}\label{eq:primal_dual}
\begin{align}
  \dot{x} &=  - \nabla f(x) -A^\top \lambda \\
  \tau \dot{\lambda} &= Ax-b
\end{align}
\end{subequations}
This system, when linearized results in a saddle matrix, therefore, we may use Theorem~\ref{thm:quarter_primal_dual} in order to show that these dynamics are semicontracting.

\begin{lemma}[Semicontraction of primal-dual dynamics]
 If the function $f(x)$ is strongly convex with parameter $\mu$ and Lipschitz with constant $\ell$, 
 and $\amin$, $\amax$ respectively are the smallest and largest nonzero eigenvalues of $AA^\top$, 
 then the dynamics~\eqref{eq:primal_dual} that solve the primal-dual optimization problem in continuous time are semicontracting in the Euclidean norm weighted by $P$, where $P$ is defined in~\eqref{def:P}, with a rate
 \begin{equation*}
 		c = \frac{1}{4}\min\left\{\frac{\amin}{\tau \ell}, \frac{\amin}{\amax}\mu \right\}. 
 \end{equation*}
\end{lemma}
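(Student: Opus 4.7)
The plan is to reduce the claim to Theorem~\ref{thm:quarter_primal_dual} by computing the Jacobian of the primal-dual dynamics~\eqref{eq:primal_dual} and recognizing that it is exactly a saddle matrix of the form $\mcS$. Concretely, the Jacobian at any point $x$ is
\begin{equation*}
  J(x) = \begin{bmatrix} -\nabla^2 f(x) & -A^\top \\ \tau^{-1} A & 0 \end{bmatrix},
\end{equation*}
so we set $Q = \nabla^2 f(x)$ and apply the theorem uniformly in $x$. Since semicontractivity is characterized by a pointwise bound on $\mu_{\snorm{\cdot}}(J(x))$, if the LMI~\eqref{eq:saddle-semicontract} holds for every $x$ with a common $P$ and a common rate $c$, we are done via the Demidovich condition.

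The next step is to verify the two hypotheses of Theorem~\ref{thm:quarter_primal_dual} for $Q = \nabla^2 f(x)$. Since $f$ is twice differentiable, $\mu$-strongly convex, and has $\ell$-Lipschitz gradient, the Hessian $Q$ is symmetric with $\mu I \preceq Q \preceq \ell I$. Symmetry gives $(Q+Q^\top)/2 = Q$, so strong convexity yields $\qmin \geq \mu$. For the second condition, symmetry again gives $Q^\top Q = Q^2$, and the bound $Q \preceq \ell I$ implies $Q^2 \preceq \ell Q = \ell (Q+Q^\top)/2$, so one can take $\qmax = \ell$. The hypotheses on $A$ translate directly: $\amin$ and $\amax$ are the smallest nonzero and largest eigenvalues of $AA^\top$, and the orthogonal projection $\Pi_A$ onto $\image(A)$ satisfies $AA^\top \succeq \amin \Pi_A$ and $AA^\top \preceq \amax I_m$ by definition.

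Now Theorem~\ref{thm:quarter_primal_dual} applies with these constants and yields the semicontractivity LMI $J(x)^\top P + P J(x) \preceq -2cP$ with
\begin{equation*}
  \alpha = \tfrac{1}{2}\min\!\Big\{\tfrac{1}{\ell},\,\tau\tfrac{\mu}{\amax}\Big\}, \qquad c = \tfrac{1}{2}\tau^{-1}\alpha \amin.
\end{equation*}
Substituting $\alpha$ into $c$ distributes the min and gives
\begin{equation*}
  c = \tfrac{1}{4}\min\!\Big\{\tfrac{\amin}{\tau\ell},\,\tfrac{\amin}{\amax}\mu\Big\},
\end{equation*}
which matches the claim. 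Because the same $P$ works for every $x$ and $\mcS \ker(P) \subseteq \ker(P)$ is guaranteed by the theorem, the Demidovich condition stated before the lemma yields $\mu_{2,R}(J(x)) \leq -c$ uniformly, where $P = R^\top R$. This is precisely the pointwise bound~\eqref{eq:semicontracting} defining strong infinitesimal semicontraction in the $P$-weighted Euclidean seminorm.

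I do not anticipate a genuine obstacle, since the proof is essentially a direct instantiation of Theorem~\ref{thm:quarter_primal_dual}. The only subtlety worth spelling out is the translation of Lipschitz gradient into the asymmetric-looking bound $Q^\top Q \preceq \qmax (Q+Q^\top)/2$: here the symmetry of $\nabla^2 f$ makes this collapse to $Q \preceq \ell I$, so $\qmax = \ell$ is tight and no additional factor is lost. Everything else is substitution into the formula for $c$ from Theorem~\ref{thm:quarter_primal_dual}.
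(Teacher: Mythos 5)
Your proposal is correct and follows exactly the paper's route: identify the Jacobian of~\eqref{eq:primal_dual} as a saddle matrix with $Q = \nabla^2 f(x)$, check $\qmin = \mu$ and $\qmax = \ell$ (using symmetry of the Hessian), and apply Theorem~\ref{thm:quarter_primal_dual}; your substitution of $\alpha$ into $c$ matches the stated rate. The only difference is that you spell out the verification of the hypotheses and the final substitution, which the paper leaves implicit.
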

\begin{proof}
	The Jacobian of the system is given by
	\begin{equation*}
		\mcS = 
		\begin{bmatrix}
		- \nabla^2 f(x) & -A^\top \\
		\tau^{-1}A & 0
		\end{bmatrix}.
	\end{equation*}
	The Jacobian is in the form of a saddle matrix, and ${\mu I \preceq \nabla^2 f(x) \preceq \ell I}$. Using Theorem~\ref{thm:quarter_primal_dual}, we get the required result.
\end{proof}
\subsection{Distributed Optimization Dynamics}
In our distributed optimization setup, a team of agents seeks to minimize a decomposable function of the form $f(x) = \sum_{i=1}^{N} f_i(x)$, where $f_i: \real^n \rightarrow \real$, and the sum of all cost functions is \emph{strongly convex}. In this setup, each agent has a local estimate $x_{[i]} \in \real^n$ of the solution to the problem, and agents exchange information over a network with a \emph{strongly connected graph} with a Laplacian $L$. This setup can be represented as the following optimization problem.
\begin{subequations}
\begin{align}
	 	\min_{x \in \real^{nN}} \quad &\sum\nolimits_{i=1}^{N} f_i (x_{[i]})\nonumber\\
		\text{subject to } \quad &(L \otimes I_n)x = 0 \nonumber
\end{align} 
\end{subequations}
We observe that this is similar to a linearly constrained convex optimization problem. Therefore, we may use a primal-dual flow to solve this system. The corresponding dynamics to solve this problem are of the form,
\begin{subequations}\label{eq:distr_primal_dual_laplacian}
\begin{align}
	\dot x &= - \nabla f(x) - (L\otimes I_n)^\top \lambda \\
	\tau \dot \lambda &= (L\otimes I_n) x 
\end{align}
\end{subequations}
Note that these dynamics allow the graph to be directed. However, to obtain an accurate minimizer for the cost function, the graph must be strongly connected. We also make the following assumption about the cost function.
\begin{assumption}\label{asmp:distr_convex}
 $\sum\nolimits_{i=1}^{N} f_i(x_{[i]})$ is twice differentiable, strongly convex with parameter $\mu$ and is Lipschitz in $x$ with parameter $\ell$.
\end{assumption}
We may now study dynamics~\eqref{eq:distr_primal_dual_laplacian} using Theorem~\ref{thm:quarter_primal_dual}.
 \begin{theorem}[Distributed Optimization with Laplacian Constraints]~\label{thm:distr_opt_laplacian}
	The continuous time dynamics~\eqref{eq:distr_primal_dual_laplacian} are semicontracting in a weighted $L_2$ norm with weight 
	\begin{equation*}\label{eq:def_P_distr_opt}
		P = 	\begin{bmatrix}
				I & \alpha (L\otimes I_n)^\top  \\
				\alpha (L\otimes I_n)  &\tau \Pi_{L\otimes I_n}
			\end{bmatrix}
	\end{equation*} 

	with a rate 
	  \begin{equation*}
    		c = \frac{1}{4}\min\Big\{\frac{\lambda_2^2}{\tau\ell},\frac{\lambda_2^2}{\lambda_n^2}\qmin\Big\}.
 	 \end{equation*}
	  where $\lambda_2$ and $\lambda_n$ are the smallest and largest non-zero eigenvalues of $L$ respectively. 
 \end{theorem}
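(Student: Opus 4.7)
The plan is to recognize that this theorem is an application of Theorem~\ref{thm:quarter_primal_dual} (Semicontractivity of Saddle Matrices) to the specific primal-dual system~\eqref{eq:distr_primal_dual_laplacian}. The strategy is to compute the Jacobian, verify it has the saddle structure, extract the four parameters $\qmin,\qmax,\amin,\amax$ for the particular $Q$ and $A$ at hand, and then read off the rate directly from the general formula.

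First I would compute the Jacobian of the right-hand side of~\eqref{eq:distr_primal_dual_laplacian} with respect to $(x,\lambda)$. This gives
\begin{equation*}
\mcS(x) = \begin{bmatrix} -\nabla^2 f(x) & -(L\otimes I_n)^\top \\ \tau^{-1}(L\otimes I_n) & 0\end{bmatrix},
\end{equation*}
which is in the saddle form of Theorem~\ref{thm:quarter_primal_dual} with $Q=\nabla^2 f(x)$ and $A=L\otimes I_n$. Under Assumption~\ref{asmp:distr_convex} the Hessian is symmetric and satisfies $\mu I \preceq \nabla^2 f(x) \preceq \ell I$, so $(Q+Q^\top)/2 = Q$ and the hypotheses $\qmin = \mu$ and $Q^\top Q = Q^2 \preceq \ell\,Q$ hold, giving $\qmax = \ell$.

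Next I would compute the spectral parameters of $A=L\otimes I_n$. Using the Kronecker identity $AA^\top = (LL^\top)\otimes I_n$, the nonzero eigenvalues of $AA^\top$ coincide (with multiplicity $n$) with the nonzero eigenvalues of $LL^\top$, namely the squared nonzero singular values of $L$. For a strongly connected graph these range from $\lambda_2^2$ to $\lambda_n^2$, so $\amin = \lambda_2^2$ and $\amax = \lambda_n^2$, and the orthogonal projector $\Pi_A$ onto the image of $A$ is well defined and commutes with the block structure, giving exactly the $P$ displayed in the theorem. Since $\sup_x \mu_{2,P^{1/2}}(\mcS(x))$ is bounded uniformly in $x$ by the rate produced by Theorem~\ref{thm:quarter_primal_dual}, we substitute the four parameters into
\begin{equation*}
c = \tfrac{1}{4}\min\Big\{\tfrac{\amin}{\tau\,\qmax},\;\tfrac{\amin}{\amax}\qmin\Big\}
\end{equation*}
and obtain the claimed rate $c = \tfrac{1}{4}\min\{\lambda_2^2/(\tau\ell),\, (\lambda_2^2/\lambda_n^2)\mu\}$.

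The main subtlety, rather than the main obstacle, is justifying the $A$-block computation when $L$ is the Laplacian of a (possibly directed) strongly connected graph: one must read $\lambda_2,\lambda_n$ as the smallest and largest nonzero singular values of $L$ and verify that $\image(A)$ is exactly the $n$-fold copy of $\image(L)$ so that $\Pi_A = \Pi_L \otimes I_n$ fits the $P$ in~\eqref{def:P}. Once this is in hand, invariance of $\ker(P)$ under $\mcS(x)$ follows from $\mcS(x)\ker(P)\subseteq \ker(P)$ (the primal block leaves $\{0\}\oplus \ker(L^\top\otimes I_n)$ invariant because the bottom-left block maps into $\image(L\otimes I_n)\perp \ker(L^\top\otimes I_n)$), and the theorem follows directly from Theorem~\ref{thm:quarter_primal_dual}.
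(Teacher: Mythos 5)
Your proposal is correct and takes essentially the same route as the paper: linearize~\eqref{eq:distr_primal_dual_laplacian} to get a saddle-matrix Jacobian with $Q=\nabla^2 f(x)$ and $A=L\otimes I_n$, use Assumption~\ref{asmp:distr_convex} and the Kronecker identity $(L\otimes I_n)(L\otimes I_n)^\top = LL^\top\otimes I_n$ to set $\qmin=\mu$, $\qmax=\ell$, $\amin=\lambda_2^2$, $\amax=\lambda_n^2$, and then invoke Theorem~\ref{thm:quarter_primal_dual}. If anything, you are slightly more careful than the paper's proof in reading $\lambda_2,\lambda_n$ as nonzero singular values of $L$ in the directed case and in checking that $\Pi_{L\otimes I_n}$ and the kernel invariance fit the weight~\eqref{def:P}, points the paper leaves implicit.
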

 \begin{proof}
 The linearized version of this system is
\begin{align*}
	\begin{bmatrix}
		\dot{ \delta  x} \\
		\dot{ \delta \lambda}
	\end{bmatrix}&= 
	\begin{bmatrix}
		-\nabla^2 f(x) &  -(L\otimes I_n)^\top\\
		\tau^{-1} (L\otimes I_n) & 0
	\end{bmatrix}	\begin{bmatrix}\delta x\\\delta \lambda\end{bmatrix}
\end{align*}

 	We know that $\nabla^2 f(x)$ is strongly convex with parameter $\mu$ and Lipschitz with parameter $\ell$. 
  Therefore, $\qmin = \mu,\qmax = \ell$. Further, from the properties of the Kronecker product, 
	\begin{align*}
	(L\otimes I)(L\otimes I_n)^\top &= (L\otimes I_n)(L^\top \otimes I_n) = LL^\top \otimes I_n 
	\end{align*}
	and  $LL^\top \otimes I_n $ has the same eigenvalues as the $LL^\top$, but each with multiplicity $n$ times the original 
  multiplicity. The result follows from an application of Theorem~\ref{thm:quarter_primal_dual}. 
\end{proof}
Alternatively, we may model the graph using the incidence matrix $B\in \real^{N \times M}$. Each column of the incidence matrix $B \in \real^{N \times M}$ is associated with an edge in the original graph, and each row is associated with a row in the original matrix. The problem is now represented as,
\begin{align*}
	 	\min_{x \in \real^{nN}} \quad &\sum\nolimits_{i=1}^{N} f_i (x_{[i]})\\
		\text{subject to } \quad &(B^\top \otimes I_n)x = 0 
\end{align*}
The corresponding flow dynamics for this problem are,
\begin{subequations}\label{eq:distr_primal_dual_incidence}
\begin{align}
	\dot x &= - \nabla f(x) - (B^\top\otimes I_n)^\top \lambda \\
	\tau \dot \lambda &= (B^\top\otimes I_n) x 
\end{align}  
\end{subequations}
 \begin{theorem}[Distributed Optimization with incidence matrix constraints]~\label{thm:distr_opt_incidence}
	The continuous time dynamics~\eqref{eq:distr_primal_dual_incidence} are semicontracting in a weighted $L_2$ norm with weight 
	\begin{equation*}
		P = 	\begin{bmatrix}
				I & \alpha (B^\top\otimes I_n)^\top  \\
				\alpha (B^\top \otimes I_n)  &\tau \Pi_{L\otimes I_n}
			\end{bmatrix}
	\end{equation*} 

	with a rate 
	  \begin{equation*}
    		c = \frac{1}{4}\min\Big\{\frac{\lambda_2}{\tau\ell},\frac{\lambda_2}{\lambda_n}\qmin\Big\}.
 	 \end{equation*}
	  where $\lambda_2$ and $\lambda_n$ are the smallest and largest non-zero eigenvalues of $L$ respectively. 
 \end{theorem}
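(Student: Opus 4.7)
The plan is to mirror the proof of Theorem~\ref{thm:distr_opt_laplacian}: linearize, identify the saddle-matrix structure, compute the spectral parameters of the constraint matrix, and invoke Theorem~\ref{thm:quarter_primal_dual}. Linearizing~\eqref{eq:distr_primal_dual_incidence} at an arbitrary point yields the Jacobian
\begin{equation*}
\mcS = \begin{bmatrix}
-\nabla^2 f(x) & -(B^\top \otimes I_n)^\top \\
\tau^{-1}(B^\top \otimes I_n) & 0
\end{bmatrix},
\end{equation*}
which is a saddle matrix with $Q = \nabla^2 f(x)$ and constraint matrix $A = B^\top \otimes I_n$.

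Under Assumption~\ref{asmp:distr_convex}, $\mu I \preceq \nabla^2 f(x) \preceq \ell I$, so one may take $\qmin = \mu$ and $\qmax = \ell$. By properties of the Kronecker product,
\begin{equation*}
AA^\top = (B^\top \otimes I_n)(B \otimes I_n) = (B^\top B) \otimes I_n.
\end{equation*}
The crucial spectral observation is that $B^\top B$ and $BB^\top = L$ share the same nonzero eigenvalues. Hence the nonzero spectrum of $AA^\top$ is precisely that of $L$, each eigenvalue appearing with multiplicity $n$ times its multiplicity in $L$. Since the graph is strongly connected, $L$ has a one-dimensional kernel spanned by $\ones{N}$, and its remaining eigenvalues range from $\lambda_2$ to $\lambda_n$. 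This identifies $\amin = \lambda_2$ and $\amax = \lambda_n$.

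Plugging these parameters into Theorem~\ref{thm:quarter_primal_dual} yields the semicontractivity LMI with the required weight and rate $c = \tfrac{1}{2}\tau^{-1}\alpha \amin$, where $\alpha = \tfrac{1}{2}\min\{1/\ell,\, \tau \mu / \lambda_n\}$. Direct substitution then produces
\begin{equation*}
c = \tfrac{1}{4}\min\Big\{\tfrac{\lambda_2}{\tau\ell},\, \tfrac{\lambda_2}{\lambda_n}\qmin\Big\},
\end{equation*}
matching the statement.

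The step I expect to need the most care is the spectral identification above, together with the interpretation of the projector appearing in the $(2,2)$ block of $P$: directly from Theorem~\ref{thm:quarter_primal_dual} this is the orthogonal projector onto $\image(B^\top \otimes I_n) \subseteq \real^{Mn}$, i.e.\ $\Pi_{B^\top} \otimes I_n$, which must be reconciled with the expression $\Pi_{L \otimes I_n}$ written in the theorem via the natural isomorphism between $\image(B^\top)$ and $\image(L)$ induced by $B$. Beyond this bookkeeping no new ideas are required, and the payoff is the improvement from $\lambda_2^2/\lambda_n^2$ in the Laplacian formulation to $\lambda_2/\lambda_n$ here, arising precisely because $AA^\top$ in the incidence-based setup carries the spectrum of $L$ rather than of $L^2$.
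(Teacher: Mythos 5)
Your proof is correct and follows essentially the same route as the paper's: linearize to get the saddle matrix with $Q=\nabla^2 f(x)$ and $A=B^\top\otimes I_n$, use $AA^\top=B^\top B\otimes I_n$ and the fact that $B^\top B$ and $BB^\top=L$ share nonzero eigenvalues to set $\amin=\lambda_2$, $\amax=\lambda_n$, and invoke Theorem~\ref{thm:quarter_primal_dual}. Your remark that the $(2,2)$ block of the weight should be the projector onto $\image(B^\top\otimes I_n)$ (i.e.\ $\Pi_{B^\top}\otimes I_n$, of size $nM$, rather than the $\Pi_{L\otimes I_n}$ appearing in the statement, which has the wrong dimensions when $M\neq N$) is a correct reading of Theorem~\ref{thm:quarter_primal_dual} and is a detail the paper's own proof does not address.
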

 \begin{proof}
 The linearized version of this system is
\begin{align*}
	\begin{bmatrix}
		\dot{ \delta x }\\
		\dot{ \delta \lambda}
	\end{bmatrix}&= 
	\begin{bmatrix}
		-\nabla^2 f(x) &  -(B\otimes I_n)\\
		\tau^{-1} (B^\top\otimes I) & 0
	\end{bmatrix}	\begin{bmatrix}\delta x\\\delta \lambda\end{bmatrix}
\end{align*}

 	We know that $\nabla^2 f(x)$ is strongly convex with parameter $\mu$ and Lipschitz with parameter $\ell$. Therefore, $\qmin = \mu,\qmax = \ell$. Further, from the properties of the Kronecker product, 
	\begin{align*}
	(B^\top\otimes I_n)(B \otimes I_n)  = B^\top B \otimes I_n 
	\end{align*}
	Now, we know that $B^\top B$ and $BB^\top$ have the same nonzero eigenvalues. Further, $L = BB^\top$. Therefore, $B^\top B \otimes I_n$
	 has the same nonzero eigenvalues as the $L$, but each with multiplicity $n$ times the original multiplicity. The result follows from an application of Theorem~\ref{thm:quarter_primal_dual}. 
\end{proof}

\begin{remark}
  In both Theorem~\ref{thm:distr_opt_laplacian}
  and~\ref{thm:distr_opt_incidence}, the contraction rate depends on
  $\lambda_2/\lambda_n$. Notably, this parameter appears also in the study
  of oscillator networks and their optimal synchronizability~\cite{GC:22}
  via the master stability function approach~\cite{MB-LMP:02}. A higher
  value for $\lambda_2/\lambda_n$ implies both better synchronizability in
  oscillator networks as well as an improved estimate for the contraction
  rate in distributed optimization.
\end{remark}

\subsection{Numerical Simulations}
The lower bound we achieve for the convergence rate using the incidence matrix is faster than the Laplacian constraints, as $\lambda_2/\lambda_n< 1$. To study this comparison further, we considered a simple distributed optimization problem, where each agent attempts to minimize a local function $q_i(x_i - v_i)^2$, for $x_i,v_i,q_i \in \real, q_i > 0$, and $v_i$ and $q_i$ are sampled from uniform distributions in $[0,10]$, and $\tau$ is chosen sufficiently small.  The underlying network is a symmetric connected Erd\H{o}s-R\'enyi graph with $N = 40$ nodes representing agents, with varying edge probability parameters. We construct the saddle matrix of this graph using both the incidence and Laplacian constraints and compute their dominant eigenvalue. On considering 50 graphs for each probability value, the average dominant eigenvalue and its confidence bound are presented in Figure~\ref{fig:lap_inc_comparison}. The Laplacian-based dynamics are faster than the incidence matrix-based dynamics, which implies that the bounds we obtain for the semi-lognorms are weak for this example system. There appears to be an inherent tradeoff between the generality of a bound and its tightness, as observed here, and it remains an open problem to identify better bounds for specific cases.

\begin{figure}
  \centering
  \includegraphics[width=0.95\linewidth]{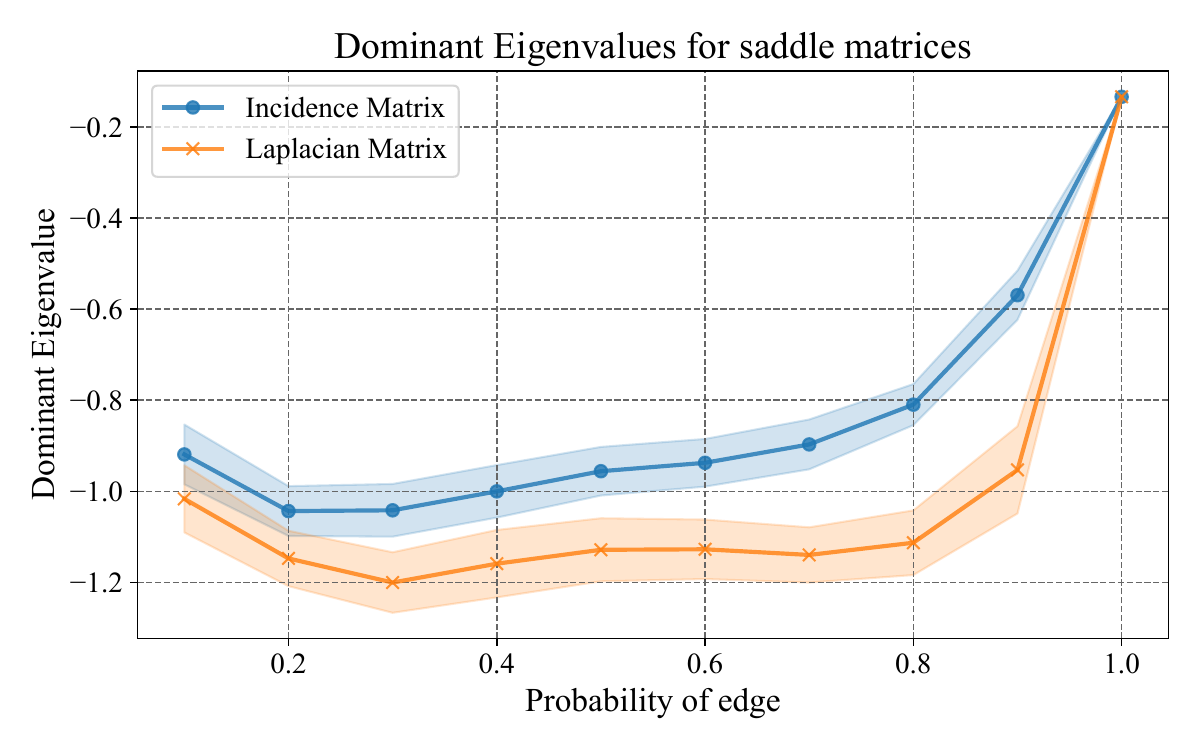}
  \caption{Plot comparing mean dominant eigenvalues of saddle matrices from
    Laplacian and incidence matrix constraints in a distributed
    optimization setup on Erd\H{o}s-R\'enyi graphs with varying edge
    probabilities.}
  \label{fig:lap_inc_comparison}
\end{figure}

\section{Contractivity in Nash seeking dynamics}\label{sec:game_theory}


  


  

In this section, we discuss the contractivity of NE-seeking dynamics that are known to converge to a Nash equilibrium. Contractivity in such systems can be used to prove the robustness of such equilibrium points to delays and noise. 

We consider a multiplayer continuous-time game among ${N}$ players with perfect information exchange. At each instant of time, players pick an action ${x_i\in \real^n}$ 
in order to minimize a cost ${J_i: \real^n \times \real^{n(N-1)} \rightarrow \real}$, which is a function of both the player's actions and the actions of other players. We consider both pseudogradient and best response plays, and use the network contraction theorem to find conditions where they represent strongly infinitesimally contracting dynamics.

First, we consider the evolution of pseudogradient plays. The pseudogradient dynamics are given by 
\begin{equation}\label{eq:pseudogradient_dynamics}
  \dot x_i =\FPseudoG(x) =  - \nabla_{x_i} J_i(x_i,x_{-i})
\end{equation}
where $x_{-i}$ represents the current actions of the other players in the game.
\begin{theorem}[Contraction of Pseudogradient Plays]\label{thm:pseudogradient_contraction}
  Consider a game with $N$ agents, where each agent aims to minimize a cost 
  $J_i: \real^n \times \real^{n(N-1)} \rightarrow \real$, following the dynamics~\eqref{eq:pseudogradient_dynamics}. Then, if $J_i$ is 
  \begin{enumerate}
    \item $\mu_i$ strongly convex with respect to $x_i$ for each $i$, 
    \item Lipschitz with respect to $x_{j}$ for each $j\neq i$ with a constant $\ell_{ij}$, and
    \item the gain matrix for the system, ${
      \subscr{\Gamma}{PseudoG} = \begin{bmatrix}
        -\mu_1 & \hdots & \ell_{1r}\\
         \vdots & \ddots & \vdots \\
         \ell_{r1} & \hdots & -\mu_r
      \end{bmatrix}}$
    is Hurwitz.
  \end{enumerate}
  Then the system is infinitesimally contracting with respect to some norm, with a rate $|\alpha(\Gamma) + \epsilon|$, for $- \alpha(\Gamma) > \epsilon > 0$.
\end{theorem}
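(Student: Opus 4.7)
The plan is to recognize the pseudogradient dynamics as a network interconnection of $N$ subsystems, one per player, and then invoke the Network Contraction Theorem (Theorem~\ref{thm:Network_interaction}) directly. For each player $i$, the state is $x_i\in\real^n$ and the subsystem map is $f_i(t,x_i,x_{-i})=-\nabla_{x_i}J_i(x_i,x_{-i})$, so the overall pseudogradient flow~\eqref{eq:pseudogradient_dynamics} is exactly the interconnected dynamics to which Theorem~\ref{thm:Network_interaction} applies.

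First, I would verify hypothesis (i). Holding $x_{-i}$ fixed, the Jacobian of $f_i$ with respect to $x_i$ is $-\nabla^2_{x_ix_i}J_i(x_i,x_{-i})$, which is symmetric. Strong convexity of $J_i$ in $x_i$ gives $\nabla^2_{x_ix_i}J_i\succeq \mu_i I$, hence $-\nabla^2_{x_ix_i}J_i\preceq -\mu_i I$. Because the Euclidean log norm of a symmetric matrix equals its largest eigenvalue, this yields $\mu_2(-\nabla^2_{x_ix_i}J_i)\leq -\mu_i$, so the map $x_i\mapsto f_i(t,x_i,x_{-i})$ is strongly infinitesimally contracting with rate $c_i=\mu_i$ in $\norm{\cdot}_2$.

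Next, for hypothesis (ii), the assumption that (the gradient) $\nabla_{x_i}J_i$ is Lipschitz in $x_j$ with constant $\ell_{ij}$ translates directly to the map $x_j\mapsto f_i(t,x_i,x_{-i})$ (with other components of $x_{-i}$ fixed) being Lipschitz with constant $\gamma_{ij}=\ell_{ij}$. Assembling the $c_i$ on the diagonal and $\gamma_{ij}$ off-diagonal produces exactly the gain matrix $\subscr{\Gamma}{PseudoG}$ in the statement, which is Hurwitz by hypothesis (iii). Theorem~\ref{thm:Network_interaction} then guarantees, for every $\epsilon\in{]0,-\alpha(\subscr{\Gamma}{PseudoG})[}$, the existence of a norm on $\real^{nN}$ under which the closed-loop pseudogradient dynamics are strongly infinitesimally contracting with rate $|\alpha(\subscr{\Gamma}{PseudoG})+\epsilon|$, which is the claim.

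The bulk of the argument is bookkeeping; the only subtle point is to ensure the interpretation of the Lipschitz assumption on $J_i$ as a Lipschitz bound on the component map $x_j\mapsto \nabla_{x_i}J_i$, since that is the quantity entering $f_i$ and therefore the one that must match the off-diagonal entries of $\subscr{\Gamma}{PseudoG}$. Under that interpretation the proof reduces to a single citation of Theorem~\ref{thm:Network_interaction}; otherwise, one additional mean-value step would be needed to convert a Lipschitz bound on $J_i$ into one on its gradient, which is the only place that requires care.
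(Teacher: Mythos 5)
Your proposal is correct and matches the paper's proof, which is exactly a direct application of the Network Contraction Theorem~\ref{thm:Network_interaction} with $c_i=\mu_i$ from strong convexity and $\gamma_{ij}=\ell_{ij}$ from the Lipschitz hypothesis; your reading of the Lipschitz condition as a bound on $x_j\mapsto\nabla_{x_i}J_i$ is the intended one. The paper gives no more detail than this, so your write-up is, if anything, a fuller account of the same argument.
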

\begin{proof}
  The proof for this statement is a direct application of the network contraction theorem.
\end{proof}
Next, we consider the evolution of unconstrained best response plays. We construct a best response cost function,
$\BR_i:x_i \rightarrow\argmin_{x_i} \;{J_i(x_i,x_{-i})}$, which represents the best response of player $i$ with
respect to the actions of other players $x_{-i}$. We represent the best response dynamics,
\begin{equation}~\label{eq:best_response_dynamics}
  \dot x = \FBR(x) = \BR(x) - x 
  \iff \dot x_i = \BR_i(x_{-i}) - x_i 
\end{equation}
\begin{theorem}[Contraction of Best Response Plays]
  Consider a game with $N$ agents, where each agent aims to minimize a cost 
  $J_i: \real^n \times \real^{n(N-1)} \rightarrow \real$, following the dynamics~\eqref{eq:best_response_dynamics}. Then, if $J_i$ is 

  \begin{enumerate}
    \item $\mu_i$ strongly convex with respect to $x_i$ for each $i$, 
    \item Lipschitz with respect to $x_{j}$ for each $j\neq i$ with a constant $\ell_{ij}$, and
    \item the gain matrix for $\FBR$, 
    ${
      \subscr{\Gamma}{BR} = \begin{bmatrix}
        -1 & \hdots & \frac{\ell_{1r}}{\mu_1}\\
         \vdots & \ddots & \vdots \\
         \frac{\ell_{r1}}{\mu_n} & \hdots & -1
      \end{bmatrix}
    }$
    is Hurwitz.
  \end{enumerate}
  Then the system is infinitesimally contracting with respect to some norm, with a rate $|\alpha(\Gamma) + \epsilon|$, for $- \alpha(\Gamma) > \epsilon > 0$.

\end{theorem}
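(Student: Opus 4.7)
The plan is to parallel the proof of Theorem~\ref{thm:pseudogradient_contraction} and apply the Network Contraction Theorem~\ref{thm:Network_interaction} with the gain matrix $\subscr{\Gamma}{BR}$. The decisive step is replacing the direct Jacobian estimates used in the pseudogradient case with estimates for the best-response map $\BR_i$, obtained via the implicit function theorem.

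First, I would verify that the subsystems fit the hypotheses of Theorem~\ref{thm:Network_interaction}. Fix $x_{-i}$ and view the map $x_i \mapsto \BR_i(x_{-i}) - x_i$. Because $J_i$ is $\mu_i$-strongly convex in $x_i$, the best response $\BR_i(x_{-i}) = \argmin_{x_i} J_i(x_i,x_{-i})$ is well-defined, unique, and independent of $x_i$. Thus the Jacobian of the self-map is simply $-I_n$, giving strong infinitesimal contractivity with rate $c_i = 1$ in the standard Euclidean norm on $\real^n$. This matches the $-1$ entries on the diagonal of $\subscr{\Gamma}{BR}$.

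Next, and this is where the real work is, I would derive the Lipschitz constant of $\BR_i$ in $x_j$ for $j\neq i$. The first-order optimality condition defining $\BR_i$ reads
\begin{equation*}
  \nabla_{x_i} J_i\bigl(\BR_i(x_{-i}),\,x_{-i}\bigr) \;=\; 0.
\end{equation*}
Differentiating in $x_j$ and applying the implicit function theorem, which is justified because $\nabla_{x_i}^2 J_i \succeq \mu_i I_n$ is invertible, yields
\begin{equation*}
  \frac{\partial \BR_i}{\partial x_j}
  \;=\; -\bigl(\nabla_{x_i}^2 J_i\bigr)^{-1}\,\nabla_{x_j}\nabla_{x_i} J_i.
\end{equation*}
Taking operator norms and using $\snorm{(\nabla_{x_i}^2 J_i)^{-1}} \leq 1/\mu_i$ together with the Lipschitz hypothesis $\snorm{\nabla_{x_j}\nabla_{x_i} J_i} \leq \ell_{ij}$, I obtain the Lipschitz bound $\ell_{ij}/\mu_i$ for $x_j \mapsto \BR_i(x_{-i})$. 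Since the self-term $-x_i$ contributes nothing to the cross-coupling, this is exactly the Lipschitz constant $\gamma_{ij}$ required by Theorem~\ref{thm:Network_interaction}, matching the off-diagonal entries of $\subscr{\Gamma}{BR}$.

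Finally, having identified $c_i = 1$ and $\gamma_{ij} = \ell_{ij}/\mu_i$, the gain matrix of the interconnection is precisely $\subscr{\Gamma}{BR}$. By hypothesis this matrix is Hurwitz, so for every $\epsilon \in {]0,-\alpha(\subscr{\Gamma}{BR})[}$, Theorem~\ref{thm:Network_interaction} furnishes a composite norm in which the overall system $\dot x = \FBR(x)$ is strongly infinitesimally contracting with rate $|\alpha(\subscr{\Gamma}{BR}) + \epsilon|$. The main obstacle, as anticipated, is the implicit-function calculation for $D\BR_i$; once that derivative identity and its norm bound are in hand, the remainder is a bookkeeping application of the network theorem.
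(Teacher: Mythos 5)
Your proposal is correct and follows essentially the same route as the paper: identify each subsystem's contraction rate $c_i = 1$ (since $\BR_i$ does not depend on $x_i$) and the cross-coupling Lipschitz gains $\gamma_{ij} = \ell_{ij}/\mu_i$, then invoke Theorem~\ref{thm:Network_interaction} with the Hurwitz gain matrix $\subscr{\Gamma}{BR}$. The only difference is that the paper obtains the $\ell_{ij}/\mu_i$ Lipschitz bound on the best-response (argmin) map by citing a prior lemma, which rests on strong monotonicity and needs no second derivatives, whereas you rederive it via the implicit function theorem and thereby tacitly assume $J_i$ is twice continuously differentiable.
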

\begin{proof}
  The gain matrix may be computed using the results in~\cite[Lemma 1]{AD-VC-AG-GR-FB:23f}, and~\cite[Equation (3.39)]{FB:23-CTDS}.
  On using the network contraction theorem for this system, we get our required results.
\end{proof}

Next, we show a result about the equivalences of the sufficient conditions for contraction in pseudogradient
and best response plays.
\begin{theorem}[Best Response-Pseudogradient Equivalence]
  If each $J_i$ is
  \begin{enumerate}
    \item $\mu_i$ strongly convex with respect to $x_i$ for each $i$, 
    \item Lipschitz with respect to $x_{j}$ for each $j\neq i$ with a constant $\ell_{ij}$
  \end{enumerate}

  The following statements are equivalent
  \begin{enumerate}
    \item The gain matrix for $\FPseudoG$, ${\subscr{\Gamma}{PseudoG}}$ is Hurwitz.
    \item The gain matrix for $\FBR$, ${\subscr{\Gamma}{BR}}$ is Hurwitz.
    \item The discrete-time $\FBR$ gain matrix ${\subscr{\Gamma}{BR,d} = \begin{bmatrix}
      0 & \hdots & \frac{\ell_{1r}}{\mu_1}\\
       \vdots & \ddots & \vdots \\
       \frac{\ell_{r1}}{\mu_n} & \hdots & 0
    \end{bmatrix}}$ is Schur.
  \end{enumerate}
\end{theorem}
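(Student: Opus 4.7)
The plan is to observe that all three matrices in the statement are read off the same two pieces of data. Let $D$ be the positive diagonal matrix with entries $\mu_1,\ldots,\mu_N$ (positive since each $\mu_i>0$), and let $N\in\real^{N\times N}$ be the non-negative matrix with entries $\ell_{ij}$ for $i\neq j$ and zeros on the diagonal. Then directly from the definitions, $\subscr{\Gamma}{PseudoG}=-D+N$, $\subscr{\Gamma}{BR,d}=D^{-1}N$, and $\subscr{\Gamma}{BR}=\subscr{\Gamma}{BR,d}-I=D^{-1}\subscr{\Gamma}{PseudoG}$. In particular, $\subscr{\Gamma}{PseudoG}$ and $\subscr{\Gamma}{BR}$ are Metzler matrices, while $\subscr{\Gamma}{BR,d}$ is entrywise non-negative.

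To establish (ii) $\Leftrightarrow$ (iii), I would exploit the shift identity $\subscr{\Gamma}{BR}=\subscr{\Gamma}{BR,d}-I$: each eigenvalue $\lambda$ of $\subscr{\Gamma}{BR}$ equals $\nu-1$ for some eigenvalue $\nu$ of $\subscr{\Gamma}{BR,d}$, so $\subscr{\Gamma}{BR}$ is Hurwitz iff $\operatorname{Re}(\nu)<1$ for every eigenvalue $\nu$ of $\subscr{\Gamma}{BR,d}$. Since $\subscr{\Gamma}{BR,d}\geq 0$ entrywise, the Perron--Frobenius theorem guarantees that its spectral radius $\rho$ is itself a real non-negative eigenvalue upper-bounding every other eigenvalue in modulus. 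Consequently the condition $\operatorname{Re}(\nu)<1$ for all $\nu$ collapses to $\rho<1$, which is precisely the Schur condition on $\subscr{\Gamma}{BR,d}$.

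To establish (i) $\Leftrightarrow$ (ii), I would invoke the standard characterization of Hurwitz Metzler matrices (see, e.g.,~\cite{FB:23-CTDS}): a Metzler matrix $M$ is Hurwitz if and only if there exists a componentwise positive vector $v$ with $Mv$ componentwise negative. Using the identity $\subscr{\Gamma}{BR}=D^{-1}\subscr{\Gamma}{PseudoG}$ together with the fact that multiplication by the positive diagonal matrix $D^{-1}$ (respectively $D$) preserves componentwise sign patterns, a common certificate $v$ witnesses Hurwitzness of $\subscr{\Gamma}{PseudoG}$ iff it witnesses Hurwitzness of $\subscr{\Gamma}{BR}$. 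This closes the cycle of equivalences.

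The main obstacle I anticipate is cleanly invoking the Metzler-matrix Hurwitz certificate at the right level of generality so the positive-diagonal rescaling argument slides through; the Perron--Frobenius step for (ii) $\Leftrightarrow$ (iii) is routine once the shift identity of the first paragraph is noted, and no further computation beyond those three algebraic identities is required.
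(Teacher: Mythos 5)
Your proof is correct and shares the paper's algebraic backbone: the diagonal-scaling identity $\subscr{\Gamma}{BR}=\diag{\mu}^{-1}\subscr{\Gamma}{PseudoG}$ (the paper writes it as $\subscr{\Gamma}{PseudoG}=\diag{\mu}\subscr{\Gamma}{BR}$) and the shift $\subscr{\Gamma}{BR}=\subscr{\Gamma}{BR,d}-I$; your (ii)$\iff$(iii) step, combining the shift with Perron--Frobenius for the non-negative matrix $\subscr{\Gamma}{BR,d}$, is exactly the paper's. Where you genuinely differ is (i)$\iff$(ii). The paper invokes Lyapunov \emph{diagonal} stability of Hurwitz Metzler matrices: there is a positive diagonal $\diag{p}$ with $\subscr{\Gamma}{PseudoG}^\top\diag{p}+\diag{p}\subscr{\Gamma}{PseudoG}\preceq 0$, and since diagonal matrices commute this transfers to $\subscr{\Gamma}{BR}$ with weight $\diag{p}\diag{\mu}$. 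You instead use the vector certificate: a Metzler matrix $M$ is Hurwitz iff there exists a componentwise positive $v$ with $Mv$ componentwise negative, and such a $v$ is shared by $\subscr{\Gamma}{PseudoG}$ and $\subscr{\Gamma}{BR}=\diag{\mu}^{-1}\subscr{\Gamma}{PseudoG}$ because a positive diagonal factor preserves the sign pattern of $Mv$. Both certificates are standard equivalent characterizations of Hurwitz Metzler matrices (indeed the vector certificate is the very property the paper uses, via~\cite[Theorem 2.7]{FB:23-CTDS}, in its aggregative-games proof with $v=\ones{n}$), so your route is equally rigorous and arguably more elementary, needing no quadratic forms; the paper's route has the side benefit of exhibiting an explicit diagonal Lyapunov weight, which connects naturally to the weighted-norm contraction machinery used elsewhere in the paper. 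One cosmetic remark: your auxiliary matrix $N$ collides notationally with the number of players $N$; rename it to avoid confusion.
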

\begin{proof}
  First, we note that $\subscr{\Gamma}{PseudoG} = \diag{\mu}\subscr{\Gamma}{BR}$, where $\mu = [\mu_1, \mu_2, \cdots,  \mu_r]^\top\in \real^n$. Next, we prove $(i)\implies (ii)$. 
  Since $\subscr{\Gamma}{PseudoG}$ is Metzler and Hurwitz, $\subscr{\Gamma}{PseudoG}$ is Lyapunov diagonally stable, 
  i.e. there exists a diagonal matrix $\diag{p}$ such that 
  $\subscr{\Gamma}{PseudoG}^\top \diag{p} + \diag{p}\subscr{\Gamma}{PseudoG} \preceq 0$. Using the relationship 
  between $\subscr{\Gamma}{PseudoG}$ and $\subscr{\Gamma}{BR}$, we note that $\subscr{\Gamma}{BR}$ is Lyapunov diagonally stable
  with weight $\diag{p}\diag{\mu}$. Therefore, $\subscr{\Gamma}{BR}$ is Hurwitz. We may use a similar argument 
  to show that $(ii)\implies (i)$.

  Next, to show that $(ii)\iff(iii)$, we note that $\subscr{\Gamma}{BR} = -I + \subscr{\Gamma}{BR,d}$, 
  and $\subscr{\Gamma}{BR,d}$ is a non-negative matrix.
  We know that $\subscr{\Gamma}{BR}$ is Hurwitz iff $\alpha(\subscr{\Gamma}{BR}) < 0$. Further, since $\subscr{\Gamma}{BR,d}$ is non-negative,
  due to the Perron-Frobenius Theorem, we have $\alpha(\subscr{\Gamma}{BR}) < 0$ iff $\rho(\subscr{\Gamma}{BR,d}) < 1$.
\end{proof}
Next, we discuss the contractivity of aggregative games.
\begin{theorem}[Aggregative games]
  For a pseudogradient play, If  $J_i(x_i,x_{-i})$ is of the form $f_i(x_i,\frac{1}{n}\sum_{j=1}^{n} x_j)$, and for each agent $i$, if
  \begin{enumerate}
    \item $f_i$ is $\mu_i$-strongly convex with respect to its first argument
    \item $f_i$ is Lipschitz in its second argument with constant $\ell_i$, 
    \item $\mu_i > \ell_i$.
  \end{enumerate} 
  Then, the gain matrix of the system is Hurwitz, and the system is contracting.
\end{theorem}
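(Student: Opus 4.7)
The plan is to apply Theorem~\ref{thm:pseudogradient_contraction} directly: I will compute the pseudogradient of the aggregative cost, read off the effective Lipschitz constants $\ell_{ij}$ and strong-convexity constants for $J_i$, and then verify that the resulting gain matrix $\subscr{\Gamma}{PseudoG}$ is Hurwitz under the hypothesis $\mu_i > \ell_i$.

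First I would unfold the chain rule. Writing $\bar x = \frac{1}{n}\sum_{j=1}^{n} x_j$ and $J_i(x) = f_i(x_i,\bar x)$, one obtains
\begin{equation*}
\nabla_{x_i} J_i(x) = \nabla_1 f_i(x_i,\bar x) + \frac{1}{n}\nabla_2 f_i(x_i,\bar x),
\end{equation*}
so that for every $j \neq i$,
\begin{equation*}
\frac{\partial}{\partial x_j}\nabla_{x_i} J_i(x) = \frac{1}{n}\nabla^2_{12} f_i(x_i,\bar x) + \frac{1}{n^2}\nabla^2_{22} f_i(x_i,\bar x).
\end{equation*}
The crucial feature is the factor $1/n$ in front of every aggregator-induced term: averaging over $n$ players weakens each off-diagonal coupling by this factor.

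Second, I would read off the constants required by Theorem~\ref{thm:pseudogradient_contraction}. The $\mu_i$-strong convexity of $f_i$ in its first argument delivers $\mu_i$-strong convexity of $J_i$ in $x_i$, while the Lipschitz hypothesis on $f_i$ in the aggregator argument, combined with the $1/n$ scaling above, yields the bound $\ell_{ij} \le \ell_i/n$ on the effective Lipschitz constant of the pseudogradient $\nabla_{x_i}J_i$ with respect to $x_j$.

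Third, I would verify that $\subscr{\Gamma}{PseudoG}$ is Hurwitz. It is Metzler by construction, and for each $i$
\begin{equation*}
\sum_{j\neq i}\ell_{ij} \le \frac{n-1}{n}\,\ell_i < \ell_i < \mu_i,
\end{equation*}
so every row is strictly diagonally dominant toward the negative diagonal. For a Metzler matrix with negative diagonal, strict row diagonal dominance implies Hurwitzness by a direct Gershgorin disk argument (all disks lie strictly in the open left half plane). Invoking Theorem~\ref{thm:pseudogradient_contraction} then yields infinitesimal contractivity in a suitable norm.

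The most delicate step is the second one: carefully translating the stated Lipschitz hypothesis on $f_i$ in its aggregator argument into a Lipschitz constant on the pseudogradient with respect to each individual $x_j$, and handling the mixed-partial term $\frac{1}{n}\nabla^2_{12}f_i$ that contributes both to the diagonal and off-diagonal Jacobian blocks. Once this interpretation is fixed, the $1/n$ scaling inherent to aggregation makes the diagonal-dominance inequality $\mu_i > \sum_{j\neq i}\ell_{ij}$ an immediate and conservative consequence of $\mu_i > \ell_i$.
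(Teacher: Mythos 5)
Your overall route is the same as the paper's: compute the effective coupling constants induced by the $1/n$ aggregation, form the Metzler gain matrix, show it is Hurwitz by row diagonal dominance, and invoke Theorem~\ref{thm:pseudogradient_contraction}. The genuine gap is your second step, namely the claim that $\mu_i$-strong convexity of $f_i$ in its first argument ``delivers $\mu_i$-strong convexity of $J_i$ in $x_i$.'' It does not, and your own chain-rule computation shows why: since $x_i$ also enters the aggregate $\bar x$, the Jacobian of the self-map $x_i \mapsto \nabla_{x_i}J_i(x_i,x_{-i})$ is $\nabla^2_{11}f_i + \tfrac{1}{n}\bigl(\nabla^2_{12}f_i + \nabla^2_{21}f_i\bigr) + \tfrac{1}{n^2}\nabla^2_{22}f_i$, and the aggregator-induced terms need not be positive semidefinite. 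Under the stated hypotheses the guaranteed one-sided rate for the self-dynamics is only $\mu_i - \ell_i/n$, not $\mu_i$. This is exactly the point the paper's proof makes explicit (``since $x_i$ appears both in the first and second argument''), which is why its gain matrix has entries $(\subscr{\Gamma}{Agg})_{ii} = -\mu_i + \ell_i/n$ and $(\subscr{\Gamma}{Agg})_{ij} = \ell_i/n$ for $j\neq i$. You flag this as the delicate step but then silently drop the extra terms instead of resolving them.

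Fortunately the gap is easily repaired and the conclusion is unaffected: with the corrected diagonal, strict row dominance requires
\begin{equation*}
  \mu_i - \frac{\ell_i}{n} > \frac{(n-1)\,\ell_i}{n},
\end{equation*}
which is precisely the hypothesis $\mu_i > \ell_i$; equivalently $\subscr{\Gamma}{Agg}\ones{n} < 0$ elementwise, which is how the paper concludes Hurwitzness of the Metzler gain matrix. So your Gershgorin/diagonal-dominance argument goes through verbatim once the diagonal entries are replaced by $-\mu_i + \ell_i/n$; only the unjustified transfer of strong convexity from $f_i$ to $J_i$ needs to be corrected.
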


\begin{proof}
  First, we claim that the Lipschitz constant for in $x_{j}$ for $j\neq i$ is $\ell_i/n$. 
  This follows from the fact that the second argument of $f_i$ varies linearly with 
  constant $\frac{1}{n}$ in $x_j$, and our assumption regarding the Lipschitz constant of $f_i$. Further, since $x_i$ appears both in the first and second argument, we may claim that
  \begin{align*}
    &\langle x_i - y_i,\nabla f(x_i,\frac{x_i}{n} + \frac{1}{n}\sum_{j\neq i}x_j) - \nabla f_i(y_i,y_i+ \frac{1}{n}\sum_{j\neq i}x_j)\rangle \nonumber\\
    &\leq -\mu_i\norm{x_i-y_i}^2 + \frac{\ell_i}{n}\norm{x_i - y_i}^2.
  \end{align*}

 Now, we may construct the gain matrix of an aggregative game, $\subscr{\Gamma}{Agg} \in \real^{n\times n}$, 
  \begin{equation*}
    (\subscr{\Gamma}{Agg})_{ij} = \begin{cases}
      - \mu_i + \ell_i / n & i = j,\\
      \ell_i / n & i \neq j
    \end{cases}
  \end{equation*} 
  Since $\mu_i > \ell_i$, $\subscr{\Gamma}{Agg} \ones{n} < 0$, in an elementwise manner. Using properties of Metzler matrices~\cite[Theorem 2.7]{FB:23-CTDS}, $\subscr{\Gamma}{Agg}$ is Hurwitz, and the system is contracting by Theorem~\ref{thm:pseudogradient_contraction}.
\end{proof}

\section{Conclusion}
In this work, we propose the use of contraction theory as a framework to analyze game theoretic and distributed optimization dynamics. We identify a general result for saddle matrices and use it to show contraction in distributed optimization dynamics. Our numerical results suggest that our estimate for the contraction rate is not sharp. Next, we use the network interconnection theorem~\ref{thm:Network_interaction} to show contractivity in some basic game setups. 
We believe that this work opens up new avenues to explore in both distributed optimization and game theory. In distributed optimization, we plan to study other distributed optimization flows~\cite{TY-XY-JW-DW:19}. In game theory, we wish to study setups with partial information and fictitious play~\cite{JSS-GA:05}. We also intend to relax our assumption on strong convexity in future work. 
\appendix
\subsection{Proof of Theorem~\ref{thm:quarter_primal_dual}}\label{sec:appendix}
  We begin by using the Schur Complement to show that $P \succeq 0$. Clearly the $(1,1)$ block 
  is positive definite. Therefore,
\begin{align*}
  P\succeq 0 &\iff \tau \Pi_A - \alpha^2 AA^\top \succ 0 \\
  \quad&\iff \tau - \alpha^2 \amax >0  
  \iff \alpha^2 < \tau /\amax. 
\end{align*}    
 The inequality $\alpha^2 < \tau /\amax$ follows from the stronger inequality  
 $ (2\alpha)^2< \tau/\amax$ with the following argument:
\begin{align*}
  (2\alpha)^2 &\leq
   \min\Big\{\frac{1}{\qmax},\tau\frac{\qmin}{\amax}\Big\}\cdot
   \max\Big\{\frac{1}{\qmax},\tau\frac{\qmin}{\amax}\Big\} \\
  &= \frac{\qmin}{\qmax}\cdot \frac{\tau}{\amax} \leq \frac{\tau}{\amax}.
\end{align*}
Finally, we need to show that ${\mcS\ker(P) \subseteq \ker(P)}$.
Let ${[x^\top \quad y^\top]^\top\in \ker(P)}$. Then ${x = -\alpha A^\top y}$ and 
${y \in \ker(-\alpha AA^\top + \tau \Pi_A)}$. Note that $\Pi_A$ has the same kernel as $AA^\top$. 
Therefore, ${y \in \ker(AA^\top) = \ker(A^\top)}$. Next, to show the invariance of the norm with respect to the dynamics as per Lemma~\ref{lem:invariance_property},
\begin{equation*}
  \mcS\begin{bmatrix} x \\ y\end{bmatrix} = \begin{bmatrix} -A^\top y  \\ 0\end{bmatrix} = 0 \in \ker(P).
\end{equation*}  

Next, we aim to show the LMI~\eqref{eq:saddle-semicontract}. After some bookkeeping, we compute $\mcQ = -  \mcS^\top P - P\mcS - 2 c P$, and using the fact
that $\Pi_A A = A$, 
\begin{align}
  &\mcQ
 = \nonumber \\
 &\begin{bmatrix}
   Q+Q^\top - 2 \tau^{-1} \alpha A^\top A  - 2 c I_n
   &
   \alpha Q^\top A^\top  - 2 c \alpha A^\top
    \\
     \alpha AQ  - 2 c \alpha A
    &  2 \alpha A A^\top   - 2 c \tau \Pi_A
 \end{bmatrix} . \nonumber
\end{align}
The (2,2) block satisfies
the lower bound
\begin{align*}
  2 \alpha A A^\top - 2 c \tau \Pi_A &=
  2\left(\tfrac{1}{2}\alpha A A^\top - c \tau \Pi_A \right) + \alpha A A^\top  \\
  & \succeq 2\big(\tfrac{1}{2}\alpha\amin - c \tau \big)\Pi_A   + \alpha A A^\top \\
  & =  \alpha A A^\top\succ 0.
\end{align*}
Given this lower bound, we can factorize the resulting matrix as follows, 
setting $\mcA = \begin{bmatrix} I_n & 0 \\ 0 & A      \end{bmatrix}$,
\begin{align*}
 &\mcQ 
 \succeq\nonumber\\
  &\mcA
  \begin{bmatrix}
      Q+Q^\top - 2 (\tau^{-1} \alpha A^\top A + c I_n)
      &
      \alpha Q^\top  - 2 c \alpha I_n
      \\
      \alpha Q - 2 c \alpha I_n
      &  \alpha I_n 
  \end{bmatrix}
  \mcA^\top.
\end{align*}
Since $\alpha I_n \succ 0$, it now suffices to show that the
Schur complement of the (2,2) block of $n\times n$ matrix is positive
semidefinite. A sufficient condition for the positive semidefiniteness of the 
Schur complement of the $(2,2)$ block is,
\begin{align}
Q+Q^\top  - \alpha Q^\top Q &\succeq 2 (\tau^{-1} \alpha  A^\top A + c I_n)
\label{ineq:two_quarter_one}\\ 
 \text{and} \quad 2 \alpha c (Q+Q^\top) &\succeq 4 \alpha c^2 I_n.
 \label{ineq:two_quarter_two}
\end{align}
To prove~\eqref{ineq:two_quarter_one}, we upper bound the
right-hand side as follows:
\begin{align*}
 2 (\tau^{-1} \alpha A^\top A +  c I_n) &\preceq
 2 (\tau^{-1} \alpha \amax  +  c ) I_n \\
 &=
 \tau^{-1} \alpha (2\amax + \amin)I_n \\
 & \preceq
 \frac{1}{2} \frac{\qmin}{\amax} (2\amax + \amin)I_n \preceq\frac{3}{2}\qmin I_n.
\end{align*}
where the second equality follows from the definition of $c$ and the last inequality 
follows as $\alpha\leq\tfrac{1}{2}\tau \qmin/\amax$.
Next, since $\alpha\leq\frac{1}{2\qmax}$, we know $- \alpha\qmax \geq
-\frac{1}{2}$. We then lower bound the left hand side of~\eqref{ineq:two_quarter_one} as follows:
\begin{align*}
 Q+Q^\top - \alpha Q^\top Q &\succeq   Q+Q^\top -  \alpha\qmax (Q+Q^\top)/2 \\
 &\succeq (2 - \frac{1}{2}) \frac{1}{2}(Q+Q^\top) \succeq \frac{3}{2}\qmin I_n.
\end{align*}
Finally, it remains to prove~\eqref{ineq:two_quarter_two}
that is, ${2 \alpha c (Q+Q^\top) \succeq 4 \alpha c^2 I_n}$.  This
inequality is equivalent to $Q+Q^\top \succeq 2 c I_n$ and follows from
noting $c\leq \tfrac{1}{2} \tfrac{\amin}{\amax} \qmin < \qmin$.

\begin{arxiv}
  \subsection{Proof of Theorem~\ref{thm:third_rate_primal_dual}}~\label{sec:appendix_proof_third}
  The proof is similar to the previous Theorem~\ref{thm:quarter_primal_dual}, with one key difference. The proof that $\alpha^2 < \tau/\amax$ is modified as follows,
  \begin{align*}
    \alpha^2 &\leq \\
     &\min\Big\{\frac{\epsilon}{\qmax}, \tau\frac{(2-\epsilon)\qmin}{3\amax}\Big\}\cdot
     \max\Big\{\frac{\epsilon}{\qmax}, \tau\frac{(2-\epsilon)\qmin}{3\amax}\Big\} \\
    &= \frac{\qmin}{\qmax}\cdot \frac{\tau}{\amax} \cdot \frac{\epsilon(2 - \epsilon)}{3} \leq \frac{\tau}{3\amax},
  \end{align*}
  which is true since $\epsilon(2 - \epsilon) \leq 1$ for $\epsilon \in (0,2)$. The rest of the proof follows similar to the original proof with some algebraic differences. 
  \subsection{Proof of Theorem~\ref{thm:half_rate_primal_dual}}~\label{sec:appendix_proof_half}
  The proof is very similar to the proof of Theorem~\ref{thm:quarter_primal_dual}  with the following differences. First, we identify a tighter sufficient condition than the
  one presented in~\eqref{ineq:two_quarter_one} and~\eqref{ineq:two_quarter_two}.
  \begin{align}
    \quad (1 + \alpha c)(Q+Q^\top)  - \alpha Q^\top Q &\succeq 2 (\tau^{-1} \alpha  A^\top A + c I_n)\label{ineq:two_half_one}\\
     \text{and} \;  \alpha c (Q+Q^\top) &\succeq 4 \alpha c^2 I_n.
     \label{ineq:two_half_two}
   \end{align}
  The inequality~\eqref{ineq:two_half_two}, $\alpha c (Q+Q^\top) \succeq 4 \alpha c^2 I_n$ is 
  equivalent to $Q+Q^\top \succeq 2 c I_n$ and follows from noting that
  $c \leq \tfrac{1}{2}\qmin$, since we assume $\alpha \leq \tau\qmin / \amin$.
  On simplifying~\eqref{ineq:two_half_one} using similar arguments to the previous proof, we obtain the following condition on $\alpha$,
  \begin{align}
    (2 - \alpha \qmax + \tau^{-1} \alpha^2 \amin)\qmin \geq 3\tau^{-1}\alpha \amax.
  \end{align}
  Next, we note that 
  \begin{align*}
    2 - \alpha (\qmax +3\tau^{-1} \frac{\amax}{\qmin}) + \tau^{-1} \alpha^2 \amin < 0
  \end{align*}
  for $\alpha = \sqrt{\frac{\tau}{\amax}}$. This is because the inequality simplifies to
  \begin{align*}
     \qmax\sqrt{\frac{\tau}{\amax}} +\sqrt{\frac{\amax}{\tau}}\frac{3}{\qmin} > 2 + \frac{\amin}{\amax}
  \end{align*}
  Now, through an AM-GM inequality, the LHS is lower bounded by $2\sqrt{3\tfrac{\qmax}{\qmin}} \geq 2\sqrt{3}$, and the RHS is upper bounded by $3$. Since $2\sqrt{3} > 3$, the quadratic equation is guaranteed to have a root. Since $\alpha \leq \beta_1 < \sqrt{\frac{\tau}{\amax}}$, the quadratic constraint is guaranteed to hold, along with a guarantee of the positive definiteness of $P$.

  \end{arxiv}

\bibliography{alias, Main, FB,New}

\clearpage

\end{document}